\newtheorem{theorem}{Theorem}[section]
\newtheorem{corollary}[theorem]{Corollary}
\newtheorem{lemma}[theorem]{Lemma}
\theoremstyle{definition}
\newtheorem{definition}[theorem]{Definition}
\numberwithin{equation}{section}
\begin{document}

\title[Dunford-Henstock-Kurzweil and Dunford-McShane Integrals]
{Dunford-Henstock-Kurzweil and Dunford-McShane Integrals \\ of Vector-Valued Functions defined on \\ $m$-dimensional bounded sets}

\author{ Sokol Bush Kaliaj }

\address{
Mathematics Department, 
Science Natural Faculty, 
University of Elbasan,
Elbasan, 
Albania.
}

\email{sokol\_bush@yahoo.co.uk}

\thanks{}

\subjclass[2010]{Primary 28B05, 46B25; Secondary  46G10.}

\keywords{Dunford-Henstock-Kurzweil integral,  Henstock-Kurzweil integral,  
Dunford-McShane integral, McShane integral,  
Banach space, $m$-dimensional bounded Lebesgue measurable sets. }

\begin{abstract} 
In this paper, we define the Dunford-Henstock-Kurzweil and 
the Dunford-McShane integrals of Banach space valued functions 
defined on a bounded Lebesgue measurable subset 
of $m$-dimensional Euclidean space $\mathbb{R}^{m}$. 
We will show that the new integrals are "natural" extensions of the McShane and the Henstock-Kurzweil integrals 
from $m$-dimensional closed non-degenerate intervals 
to $m$-dimensional bounded Lebesgue measurable sets.
As applications, 
we will present full descriptive characterizations of the McShane and Henstock-Kurzweil integrals in terms of our integrals. 
Moreover, a relationship between new integrals will be proved in terms of the Dunford integral.
\end{abstract}

\maketitle

\section{Introduction}

In the paper \cite{KAL1},  the Hake-Henstock-Kurzweil  and the Hake-McShane integrals 
are defined. 
It is proved that those integrals are "natural" extensions of 
the Henstock-Kurzweil and the McShane integrals 
from $m$-dimensional closed non-degenerate intervals 
to 
$m$-dimensional open and bounded sets, see Theorems 3.1 and 3.2 in \cite{KAL1}. 
The motivation behind those new integrals is to obtain Hake-type theorems 
for the Henstock-Kurzweil and the McShane integrals 
of a Banach space valued function defined on 
a closed non-degenerate interval in $m$-dimensional Euclidean space $\mathbb{R}^{m}$, 
see Theorems 3.3 and 3.4   in \cite{KAL1}.

In this paper, we define the Dunford-Henstock-Kurzweil and 
the Dunford-McShane integrals of Banach space valued functions 
defined on a bounded subsets $G \subset \mathbb{R}^{m}$ 
such that $|G \setminus G^{o}|=0$. 
We will show that the new integrals are also "natural" extensions of the McShane and the Henstock-Kurzweil integrals 
from $m$-dimensional closed non-degenerate intervals 
to $m$-dimensional bounded Lebesgue measurable sets, 
see Theorems \ref{T2.1} and \ref{T2.2}.

As applications, 
we will present full descriptive characterizations of the McShane 
and the Henstock-Kurzweil integrals 
in terms of our integrals, 
see Theorems \ref{T2.11} and \ref{T2.21}.

In the paper \cite{FRE1} D.~H.~Fremlin proved the following result 
for the case of a compact non-degenerate subinterval 
$I \subset \mathbb{R}$.

\begin{theorem}[Fremlin's Theorem] 
A function $f : I \to X$ is McShane integrable on $I$ 
if and only if it is Henstock-Kurzweil integrable and Pettis integrable on $I$.
\end{theorem}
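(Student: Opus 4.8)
The statement is an equivalence, so the plan is to establish the two implications separately, the forward one being routine and the converse carrying all the difficulty.

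For the forward implication, suppose $f$ is McShane integrable on $I$ with integral $w$. Every Henstock--Kurzweil tagged partition is in particular a McShane tagged partition (one whose tags happen to lie in the intervals they tag), so the Cauchy condition defining the McShane integral, read only on these partitions, yields Henstock--Kurzweil integrability with the same value. For Pettis integrability I would fix $x^{*}\in X^{*}$ and apply it to the Riemann sums $S(f,P)=\sum_i f(t_i)|I_i|$: by linearity and continuity $x^{*}\circ f$ is scalar McShane integrable, hence Lebesgue integrable, with $(\mathrm{M})\!\int_E x^{*}f=x^{*}\big((\mathrm{M})\!\int_E f\big)$ for every measurable $E\subseteq I$. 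Since the McShane integral exists over every measurable subset and is countably additive there, the vectors $\nu(E):=(\mathrm{M})\!\int_E f$ exhibit $f$ as Pettis integrable.

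For the converse assume $f$ is both Henstock--Kurzweil and Pettis integrable. First I would reconcile the two primitives: for each $x^{*}$ the scalar function $x^{*}f$ is Henstock--Kurzweil integrable (from HK integrability of $f$) and Lebesgue integrable (from Pettis), and a scalar function enjoying both has equal integrals; testing against all $x^{*}$ produces a single countably additive $X$-valued set function $\nu$ on the Lebesgue measurable subsets of $I$ with $\nu(E)=(\mathrm{HK})\!\int_E f=(\mathrm{Pettis})\!\int_E f$, and the candidate McShane integral is $\nu(I)$. Two consequences of Pettis integrability drive the estimates. First, the family $\{x^{*}f:\|x^{*}\|\le 1\}$ is relatively weakly compact in $L^{1}(I)$, hence uniformly integrable (Bartle--Dunford--Schwartz together with Dunford--Pettis); so for every $\varepsilon>0$ there is $\eta>0$ with $\sup_{\|x^{*}\|\le 1}\int_E|x^{*}f|<\varepsilon$ whenever $|E|<\eta$, and in particular $\|\nu(E)\|<\varepsilon$ for such $E$. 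Second, I would reduce, as is standard in this theory, to the case where $f$ is strongly measurable with essentially separable range. The organizing observation is that the norm of a vector Riemann-sum error is a supremum of scalar errors,
$$\big\|S(f,P)-\nu(I)\big\|=\sup_{\|x^{*}\|\le 1}\Big|\sum_{i} x^{*}f(t_i)\,|I_i|-\int_I x^{*}f\Big|,$$
so it suffices to make the scalar error of $x^{*}f$ small by a single gauge valid for all $x^{*}$ at once. I would build this gauge from three ingredients: from the vector Henstock--Saks lemma, a gauge $\delta_1$ with $\big\|\sum(f(t_i)|I_i|-\nu(I_i))\big\|<\varepsilon$ for every $\delta_1$-fine partial partition whose tags lie in their intervals; a measurable decomposition of $I$ into finitely many sets $E_1,\dots,E_p$ on each of which $f$ oscillates by less than $\varepsilon$, with values $c_j\in f(E_j)$, together with an exceptional set $E_0$ of measure $<\eta$; and, by outer regularity, open sets $U_j\supseteq E_j$ chosen so small that $\sum_{j\ge 1}\|c_j\|\,|U_j\setminus E_j|<\varepsilon$ and $\sup_{\|x^{*}\|\le 1}\int_{U_j\setminus E_j}|x^{*}f|$ is small, defining $\delta_2$ by the requirement $(t-\delta_2(t),t+\delta_2(t))\subseteq U_j$ for $t\in E_j$. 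With $\delta=\min(\delta_1,\delta_2)$, every interval of a $\delta$-fine McShane partition is confined to the open set attached to its tag's class; grouping intervals by that class, replacing $f(t_i)$ by $c_j$, and invoking the uniform absolute continuity of $\nu$ on the sets $U_j\setminus E_j$ and on $E_0$ should reduce the estimate to the Henstock--Saks term and give $\|S(f,P)-\nu(I)\|<C\varepsilon$.

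The heart of the matter, and the step I expect to be the main obstacle, is the freedom of the tags: when $t_i\notin I_i$ the value $f(t_i)$ is unrelated to $f$ on $I_i$, so the Henstock--Saks lemma alone cannot control $\sum(f(t_i)|I_i|-\nu(I_i))$, and this is exactly the gap between the Henstock--Kurzweil and McShane integrals. Uniform integrability supplies the missing control, but only for the tail in measure; the delicate point is that the geometry — where a given $x^{*}f$ is large — depends on $x^{*}$, so a gauge manufactured purely from the scalar theory is a priori $x^{*}$-dependent. What restores uniformity is the interplay of the single vector gauge $\delta_1$ coming from the Henstock--Kurzweil structure with the outer-regular confinement $\delta_2$ and the simultaneous (over the unit ball of $X^{*}$) smallness of $\int_{U_j\setminus E_j}|x^{*}f|$ furnished by relative weak compactness in $L^{1}$. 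Controlling the unbounded-range contribution over $E_0$ without integrability of $\|f\|$ — achievable only through the cancellation encoded in $\nu$ rather than through $\sum\|f(t_i)\|\,|I_i|$ — is where both hypotheses are genuinely indispensable and where the argument is most technical.
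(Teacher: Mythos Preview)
The paper does not prove Fremlin's Theorem. It is stated in the introduction as a quoted result from \cite{FRE1}, with a pointer to Theorem 6.2.6 in \cite{SCH1} for the $m$-dimensional case; the paper then \emph{uses} it (in the proof of Theorem \ref{T2.22}) but nowhere supplies an argument. There is therefore nothing in the paper to compare your proposal against.

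On its own merits: your forward implication is correct. For the converse you have correctly identified the two essential ingredients --- the Saks--Henstock lemma from Henstock--Kurzweil integrability and the uniform integrability of $\{x^{*}f:\|x^{*}\|\le1\}$ in $L^{1}(I)$ from Pettis integrability --- and these are exactly what drive Fremlin's original proof. However, the step ``reduce, as is standard in this theory, to the case where $f$ is strongly measurable with essentially separable range'' is a genuine gap. Neither Henstock--Kurzweil nor Pettis integrability forces strong (Bochner) measurability of a Banach-space-valued function; the reduction is valid when $X$ is separable (via the Pettis measurability theorem, since each $x^{*}f$ is measurable), but Fremlin's theorem is stated and proved for arbitrary $X$. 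Your subsequent decomposition of $I$ into finitely many sets $E_1,\dots,E_p$ on which $f$ has oscillation below $\varepsilon$, together with a small exceptional set, is precisely the simple-function approximation that strong measurability would supply, and it is unavailable in general. Fremlin's actual argument (and the version in \cite{SCH1}) avoids any norm approximation of $f$ by step functions; the free-tag contributions are handled directly from uniform integrability combined with the Saks--Henstock gauge. Your closing paragraph correctly locates the crux at the free tags and at the non-integrability of $\|f\|$, but the mechanism you sketch presupposes exactly the strong measurability you cannot assume.
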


Checking Fremlin's proof it can be seen that it still holds when $I$ is an 
$m$-dimensional closed non-degenerate subinterval in $\mathbb{R}^{m}$, 
c.f. Theorem 6.2.6 in \cite{SCH1}. 
By using Fremlin's Theorem, we will show  
a relationship between Dunford-McShane and Dunford-Henstock-Kurzweil integrals 
in terms of the Dunford integral, see Theorem \ref{T2.2}.

\section{Notations and Preliminaries}

Throughout this paper $X$ denotes a real Banach space with 
the norm $||\cdot||$ and $X^{*}$ its dual. 
The Euclidean space $\mathbb{R}^{m}$ is equipped with the maximum norm. 
$B_{m}(t,r)$ denotes the open ball in $\mathbb{R}^{m}$ with center $t$ 
and radius $r > 0$.
We denote by $\mathcal{L}(\mathbb{R}^{m})$ the $\sigma$-algebra of Lebesgue measurable subsets 
of $\mathbb{R}^{m}$ and by 
$\lambda$ the Lebesgue measure on $\mathcal{L}(\mathbb{R}^{m})$.  
$|A|$ denotes the Lebesgue measure of $A \in \mathcal{L}(\mathbb{R}^{m})$. 
We put
$$
\mathcal{L}(A) = 
\{ A \cap L : L \in \mathcal{L}(\mathbb{R}^{m}) \}, 
$$
for any $A \in \mathcal{L}(\mathbb{R}^{m})$.

The subset $\prod_{j=1}^{m} [a_{j},b_{j}] \subset \mathbb{R}^{m}$ 
is said to be a \textit{closed non-degenerate interval} in $\mathbb{R}^{m}$, 
if $-\infty < a_{j} < b_{j} < +\infty$,  for $j=1, \dotsc, m$.
Two closed non-degenerate intervals $I$ and $J$ in $\mathbb{R}^{m}$ are said to be 
\textit{non-overlapping} if 
$I^{o} \cap J^{o} = \emptyset$, where $I^{o}$ denotes the \textit{interior} of $I$. 
By $\mathcal{I}$ the family  of all closed non-degenerate subintervals in $\mathbb{R}^{m}$ is denoted 
and by $\mathcal{I}_{E}$ the family  of all closed non-degenerate 
subintervals in $E \in \mathcal{L}(\mathbb{R}^{m})$.


Let $E \in \mathcal{L}(\mathbb{R}^{m})$. 
A  function $F: \mathcal{I}_{E} \to X$  
is said to be an \textit{additive interval function}, 
if for each two non-overlapping intervals $I, J \in \mathcal{I}_{E}$ such that 
$I \cup J \in \mathcal{I}_{E}$, we have
\begin{equation*}
F(I \cup J) = F(I) + F(J).
\end{equation*}
A pair $(t, I)$ of a point $t \in E$  and an interval $I \in \mathcal{I}_{E}$ 
is called an \textit{$\mathcal{M}$-tagged interval} in $E$, $t$ is the tag of $I$. 
Requiring $t \in I$ for the tag of $I$ 
we get the concept of an \textit{$\mathcal{HK}$-tagged interval} in $E$. 
A finite collection $\{ (t_{i}, I_{i}) : i = 1, \dotsc, p \}$ of 
$\mathcal{M}$-tagged intervals ($\mathcal{HK}$-tagged intervals) in $E$ 
is called an  \textit{$\mathcal{M}$-partition} (\textit{$\mathcal{HK}$-partition}) in $E$, 
if $\{ I_{i} : i = 1, \dotsc, p \}$ is a collection of pairwise non-overlapping intervals 
in $\mathcal{I}_{E}$. 
Given $Z \subset E$, a positive function $\delta: Z \to (0,+\infty)$ 
is called a \textit{gauge} on $Z$. 
We say that an 
$\mathcal{M}$-partition ($\mathcal{HK}$-partition) $\pi = \{ (t_{i}, I_{i}) : i = 1, \dotsc, p \}$ 
in $E$ is 
\begin{itemize}
\item
$\mathcal{M}$-partition ($\mathcal{HK}$-partition) of $E$, if $\cup_{i=1}^{p} I_{i} = E$,
\item
$Z$-tagged if $\{t_{1}, \dotsc, t_{p} \} \subset Z$,
\item 
$\delta$-fine if for each $i = 1, \dotsc, p$, we have 
$
I_{i} \subset B_{m}(t_{i},\delta(t_{i})). 
$
\end{itemize}

We now recall the definitions of the McShane and the Henstock-Kurzweil integrals of a function 
$f:W \to X$, where $W$ is a fixed interval in $\mathcal{I}$. 
The function $f$ is said to be 
\textit{McShane (Henstock-Kurzweil) integrable} on $W$ 
if there is a vector 
$x_{f} \in X$ such that for every $\varepsilon>0$, 
there exists a gauge $\delta$ on $W$  
such that for every $\delta$-fine $\mathcal{M}$-partition ($\mathcal{HK}$-partition) 
$\pi$ of $W$, we have 
$$
||\sum_{(t,I) \in \pi} f(t)|I| - x_{f}|| < \varepsilon.
$$
In this case, the vector $x_{f}$ is said to be the  
\textit{McShane (Henstock-Kurzweil) integral} of $f$ on $W$ 
and we set $x_{f}=(M)\int_{W} f d \lambda$ ($x_{f}=(HK)\int_{W} f d \lambda$). 
The function $f$ is said to be 
\textit{McShane (Henstock-Kurzweil) integrable} over a subset $A \subset W$, 
if the function $f . \mathbbm{1}_{A} : W \to X$ is McShane (Henstock-Kurzweil) integrable on $W$, 
where $\mathbbm{1}_{A}$ is the characteristic function of the set $A$. 
The McShane (Henstock-Kurzweil) integral of $f$ over $A$ will be denoted by $(M)\int_{A}f d\lambda$ 
($(HK)\int_{A}f d\lambda$).
If $f:W \to X$ is McShane integrable on $W$, then by Theorem 4.1.6 in \cite{SCH1} 
the function $f$ is the McShane integrable on each $A \in \mathcal{L}(W)$, 
while by Theorem 3.3.4 in \cite{SCH1}, 
if $f$ is Henstock-Kurzweil integrable on $W$, then $f$  
is the Henstock-Kurzweil integrable on each $I \in \mathcal{I}_{W}$. 
Therefore, we can define an additive interval function 
$F: \mathcal{I}_{W} \to X$ as follows 
$$
F(I) = (M) \int_{I} f d \lambda,~  (~ F(I) = (HK) \int_{I} f d \lambda ~),~
\text{ for all }I \in \mathcal{I}_{W},
$$
which is called the primitive of $f$.

The basic properties of the McShane integral and the Henstock-Kurzweil integral 
can be found in \cite{BON}, \cite{CAO}, \cite{DIP}, \cite{FRE1}-\cite{FRE3}, \cite{GORD1}-\cite{GORD3},  
\cite{GUO1}, \cite{GUO2}, 
\cite{KURZ}, \cite{LEE1}, \cite{LEE2} and \cite{SCH1}. 
We do not present them here. 
The reader is referred to the above mentioned references for the details.


We now define the Dunford-McShane  and the Dunford-Henstock-Kurzweil integrals 
on a bounded Lebesgue measurable subset in $\mathbb{R}^{m}$. 
Fix a bounded Lebesgue measurable subset  $E \in \mathcal{L}(\mathbb{R}^{m})$ such that 
$E^{o} \neq \emptyset$, where $E^{o}$ is the interior of $E$. 
A sequence $(I_{k})$ of pairwise non-overlapping intervals in $\mathcal{I}_{E}$ is said to be a 
\textit{division in} $E$. 
By $\mathscr{P}_{E}$ the family of all divisions in $E$ is denoted. 
A division $(I_{k}) \in \mathscr{P}_{E}$ is said to be a 
\textit{division of} $E$ if  
$$
E = \bigcup_{k=1}^{+\infty} I_{k}.
$$   
We denote by $\mathscr{D}_{E}$ the family of all divisions of $E$.   
Clearly, $\mathscr{D}_{E} \subset \mathscr{P}_{E}$.
By Lemma 2.43 in \cite{FOLL}, the family $\mathscr{D}_{E^{o}}$ is not empty,
and since 
$$
\mathscr{D}_{E^{o}} \subset \mathscr{P}_{E^{o}} \subset \mathscr{P}_{E},
$$
it follows that $\mathscr{P}_{E}$ is not empty. 
\begin{definition}
A additive interval function  $F : \mathcal{I}_{E} \to X$  
is said to be a \textit{Dunford-function}, 
if given a division $(I_{k}) \in \mathscr{P}_{E}$, we have
\begin{itemize}
\item
the series 
$$
\sum_{k: |I \cap I_{k}| > 0 } F(I \cap I_{k})
$$ 
is unconditionally convergent in $X$, 
for each $I \in \mathcal{I}$,
\item
if $(I_{k}) \in \mathscr{D}_{E^{o}}$, then the equality 
$$
F(I) = \sum_{k: |I \cap I_{k}| > 0 } F(I \cap I_{k}),
$$ 
holds for all $I \in \mathcal{I}_{E}$. 
\end{itemize} 
\end{definition}

\begin{definition}
We say that the additive interval function $F : \mathcal{I}_{E} \to X$ 
has \textit{$\mathcal{M}$-negligible variation 
($\mathcal{HK}$-negligible variation) over} a subset $Z \subset \mathbb{R}^{m}$, 
if for each $\varepsilon >0$ there exists a gauge 
$\delta_{\varepsilon}$ on $Z$ such that 
for each $Z$-tagged $\delta_{\varepsilon}$-fine $\mathcal{M}$-partition ($\mathcal{HK}$-partition) $\pi$ in $\mathbb{R}^{m}$, we have
\begin{itemize}
\item
the series 
$$
\sum_{k: |I \cap I_{k}| > 0 } F(I \cap I_{k})
$$ 
is unconditionally convergent in $X$, for each $(t,I) \in \pi$, 
\item
the inequality 
$$
||\sum_{(t,I) \in \pi} 
\left (
\sum_{k: |I \cap I_{k}| > 0 } F(I \cap I_{k})
\right ) 
|| < \varepsilon,
$$
holds, 
\end{itemize}
whenever $(I_{k}) \in \mathscr{D}_{E^{o}}$. 
We say that $F$ has  
\textit{$\mathcal{M}$-negligible variation ($\mathcal{HK}$-negligible variation) outside} of $E^{o}$ 
if $F$ has $\mathcal{M}$-negligible variation ($\mathcal{HK}$-negligible variation) 
over $(E^{o})^{c} = \mathbb{R}^{m} \setminus E^{o}$.
\end{definition}


\begin{definition}\label{DH}
We say that a function $f: E \to X$ is \textit{Dunford-McShane (Dunford-Henstock-Kurzweil) integrable} on $E$ with 
the primitive $F: \mathcal{I}_{E} \to X$,  
if we have
\begin{itemize}
\item
for each $\varepsilon >0$ there exists a gauge $\delta_{\varepsilon}$ on $E^{o}$ such that  
for each $\delta_{\varepsilon}$-fine $\mathcal{M}$-partition ($\mathcal{HK}$-partition) $\pi$ in 
$E^{o}$, we have
$$
|| \sum_{(t,I) \in \pi } 
( f(t)|I| - F(I) ) || < \varepsilon,   
$$
\item
$F$ is a Dunford-function, 
\item
$F$ has $\mathcal{M}$-negligible ($\mathcal{HK}$-negligible) variation outside of $E^{o}$.
\end{itemize}
In this case, we define the Dunford-McShane (the Dunford-Henstock-Kurzweil) integral of $f$ over $I$ as follows
\begin{equation*}
(DM) \int_{I} f d \lambda = F(I), \quad \left ( (DHK) \int_{I} f d \lambda = F(I) \right ).
\end{equation*} 
\end{definition}
Clearly, if $f: E \to X$ is Dunford-McShane (Dunford-Henstock-Kurzweil) integrable on $E$ with 
the primitive $F$ and $E = E^{o}$, 
then $f$ is Hake-McShane (Hake-Henstock-Kurzweil) integrable on $E$ with 
the primitive $F$.

Finally, we recall the definition of the Dunford integral 
in the second dual $X^{**}$ of $X$, c.f. \cite{DIES}. 
A function $f:E \to X$ is said to be \textit{Dunford integrable}, 
if $x^{*}f$ is Lebesgue integrable (or, equivalently McShane integrable) for all $x^{*} \in X^{*}$. 
In the case that $f$ is Dunford integrable, by Dunford's Lemma, 
for each $A \in \mathcal{L}(E)$, there exists 
$x^{**}_{A} \in X^{**}$ satisfying  
$$
x^{**}_{A}(x^{*}) = (M)\int_{A} x^{*}f d \lambda, 
\text{ for all }x^{*} \in X^{*},
$$
and we write $x^{**}_{A} = (D)\int_{A} f d\lambda$. 

If $(D)\int_{A} f d\lambda \in e(X) \subset X^{**}$, for all $A \in \mathcal{L}(E)$, 
then $f$ is called \textit{Pettis integrable}, where $e$ is the \textit{canonical embedding} of $X$ into $X^{**}$. 
In this case, we write $(P)\int_{A} f d\lambda$ instead of $(D)\int_{A} f d\lambda$ 
to denote \textit{Pettis integral} of $f$ over $A \in \mathcal{L}(E)$.

\section{The Main results}

From now on $G$ will be a bounded subset of  $\mathbb{R}^{m}$ such that 
$G^{o} \neq \emptyset$ and $|G \setminus G^{o}|=0$.  
Since $G$ is a bounded subset of $\mathbb{R}^{m}$, 
we can fix an interval $I_{0} \in \mathcal{I}$ such that $G \subset I_{0}$. 
Given a function $f:G \to X$, we denote by $f_{0}: I_{0} \to X$ the function defined as follows
\begin{equation*}
f_{0}(t) = 
\left \{
\begin{array}{ll}
f(t) & \text{ if }t \in G \\
0 & \text{ if }t \in I_{0} \setminus G. 
\end{array}
\right.
\end{equation*} 
Assume that the functions $f : G \to X$ and $F: \mathcal{I}_{G} \to X$ are given. 
Then, given a division $(C_{k}) \in \mathscr{P}_{G}$, 
we denote
$$
f_{k} = f \vert_{C_{k}}\text{ and }
F_{k} = F \vert_{\mathcal{I}_{C_{k}}},
\text{ for each }k \in \mathbb{N}.
$$  

Let us start with the following auxiliary lemmas.

\begin{lemma}\label{L2.1}
Let $f: G \to X$ be a function. 
Then, given $\varepsilon >0$ there exists a gauge $\delta$ on $Z = G \setminus G^{o}$ 
such that for each $\delta$-fine $Z$-tagged $\mathcal{M}$-partition 
(or $\mathcal{HK}$-partition) $\pi$ in $I_{0}$ we have
$$
|| \sum_{(t,I) \in \pi} f(t)|I| ~|| < \varepsilon.
$$ 
\end{lemma}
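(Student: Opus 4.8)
The plan is to exploit the single structural hypothesis that really matters here, namely $|G \setminus G^{o}| = 0$, so that $Z = G \setminus G^{o}$ is a Lebesgue null set. Since the tags $t$ of any $Z$-tagged partition lie in this null set, the total measure of the tagged intervals can be forced as small as we like through a suitable choice of gauge; the only genuine complication is that $f$ carries no boundedness assumption, so $||f(t)||$ may be arbitrarily large and must be balanced against $|I|$.

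To cope with the unboundedness I would first stratify $Z$ according to the size of $||f||$. Set
$$
Z_{n} = \{ t \in Z : n-1 \le ||f(t)|| < n \}, \qquad n \in \mathbb{N},
$$
so that $Z = \bigcup_{n=1}^{+\infty} Z_{n}$ is a disjoint decomposition and each $Z_{n}$, being a subset of the null set $Z$, satisfies $|Z_{n}| = 0$. By outer regularity of Lebesgue measure, for each $n$ I would choose an open set $U_{n} \supset Z_{n}$ with $|U_{n}| < \varepsilon / (n \cdot 2^{n+1})$.

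Next I would define the gauge $\delta$ on $Z$ by requiring, for $t \in Z_{n}$, that $\delta(t) > 0$ be small enough that $B_{m}(t,\delta(t)) \subset U_{n}$, which is possible since $U_{n}$ is open and contains $t$. Given any $\delta$-fine $Z$-tagged $\mathcal{M}$-partition (or $\mathcal{HK}$-partition) $\pi$ in $I_{0}$, I would group the pairs of $\pi$ according to which $Z_{n}$ contains the tag and estimate
$$
||\sum_{(t,I) \in \pi} f(t)|I|~|| \le \sum_{n=1}^{+\infty} \sum_{\substack{(t,I) \in \pi \\ t \in Z_{n}}} ||f(t)||~|I| \le \sum_{n=1}^{+\infty} n \sum_{\substack{(t,I) \in \pi \\ t \in Z_{n}}} |I|.
$$
For each fixed $n$, every interval $I$ with tag $t \in Z_{n}$ satisfies $I \subset B_{m}(t,\delta(t)) \subset U_{n}$ by $\delta$-fineness, and since the intervals of $\pi$ are pairwise non-overlapping their total measure is dominated by $|U_{n}|$; hence $\sum_{t \in Z_{n}} |I| \le |U_{n}|$. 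Substituting yields the bound $\sum_{n} n\,|U_{n}| < \sum_{n} \varepsilon / 2^{n+1} = \varepsilon/2 < \varepsilon$, which is the desired inequality.

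I would stress that this argument is insensitive to whether $\pi$ is an $\mathcal{M}$-partition or an $\mathcal{HK}$-partition: the only partition properties used are that the tagged intervals are pairwise non-overlapping and that $\delta$-fineness forces $I \subset B_{m}(t,\delta(t))$, neither of which refers to the tag lying inside its interval. The one point requiring care — and the real heart of the lemma — is the simultaneous control of the unbounded factor $||f(t)||$ and the interval measures, which the stratification $Z = \bigcup_{n} Z_{n}$ together with the summable choice $|U_{n}| < \varepsilon/(n \cdot 2^{n+1})$ resolves cleanly.
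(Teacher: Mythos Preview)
Your argument is correct. It is essentially the classical direct construction showing that a function vanishing almost everywhere has vanishing Riemann-type sums once the gauge is chosen from a suitable stratification by norm; the key observation that $|Z|=|G\setminus G^{o}|=0$ is exactly the standing hypothesis of the section, and every subsequent step (outer regularity, the gauge forcing $I\subset U_{n}$, the non-overlapping bound $\sum|I|\le |U_{n}|$) is sound. The remark that the argument does not distinguish between $\mathcal{M}$- and $\mathcal{HK}$-partitions is also accurate.

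The paper proceeds differently: it extends $f\vert_{Z}$ by zero to a function $g_{0}$ on $I_{0}$, observes that $g_{0}=0$ almost everywhere and hence is McShane (resp.\ Henstock--Kurzweil) integrable with zero primitive by a cited result in \cite{SCH1}, and then invokes the Saks--Henstock lemma (Lemma~3.4.2/3.4.1 in \cite{SCH1}) to get the partial-sum estimate restricted to $Z$-tagged partitions. In other words, the paper treats the lemma as an immediate corollary of two black-box results, whereas you unroll those results into an explicit, self-contained gauge construction. Your route has the advantage of being elementary and not depending on the Saks--Henstock machinery; the paper's route is shorter on the page and situates the lemma within the existing toolkit. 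Substantively the two arguments are the same idea viewed at different levels of abstraction: the stratification you perform is precisely what underlies the proof of the ``equal a.e.\ $\Rightarrow$ same integral'' theorem that the paper cites.
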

\begin{proof}
Define a function $g_{0} : I_{0} \to X$ as follows
\begin{equation*}
g_{0}(t) = 
\left \{
\begin{array}{ll}
f(t) & \text{ if }t \in Z \\
0 & \text{ otherwise }. 
\end{array}
\right.
\end{equation*} 
Then, by Theorem 3.3.1 ~(or Corollary 3.3.2) in \cite{SCH1}, 
$g_{0}$ is McShane (or Henstock-Kurzweil) integrable on $I_{0}$ and
$$
(M)\int_{I} g_{0} d \lambda = 0 \quad ((HK)\int_{I} g_{0} d \lambda = 0~), \text{ for all }I \in \mathcal{I}_{I_{0}}. 
$$
Therefore, by Lemma 3.4.2 ~(or Lemma 3.4.1) in \cite{SCH1}, 
given $\varepsilon >0$ there exists a gauge $\delta$ on $Z$  
such that for each $\delta$-fine $Z$-tagged $\mathcal{M}$-partition~ 
(or $\mathcal{HK}$-partition) $\pi$ in $I_{0}$ we have
$$
|| \sum_{(t,I) \in \pi} g_{0}(t)|I| ~|| < \varepsilon,
$$ 
and since 
$g_{0}(t) = f(t)$ for all $t \in Z$, the last result proves the lemma.
\end{proof}

\begin{lemma}\label{L2.2}
Let $f: G \to X$ be a function, 
and let $F: \mathcal{I}_{G} \to X$ be an additive interval function. 
If $F$ has $\mathcal{M}$-negligible~ (or $\mathcal{HK}$-negligible) variation outside of $G^{o}$, then
given $\varepsilon >0$ there exists a gauge $\delta$ on $Z = G \setminus G^{o}$ 
such that for each $\delta$-fine $Z$-tagged $\mathcal{M}$-partition~(or $\mathcal{HK}$-partition) 
$\pi$ in $I_{0}$ we have
$$
|| \sum_{(t,I) \in \pi} 
\left ( ~f(t)|I| - \sum_{k:|I \cap I_{k}| >0} F(I \cap I_{k})~ \right ) 
|| < \varepsilon,
$$ 
whenever $(I_{k}) \in \mathscr{D}_{G^{o}}$. 
\end{lemma}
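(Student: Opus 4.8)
The plan is to split the quantity to be estimated into the two pieces $\sum_{(t,I) \in \pi} f(t)|I|$ and $\sum_{(t,I) \in \pi} \left( \sum_{k:|I \cap I_{k}|>0} F(I \cap I_{k}) \right)$, to control each of them by a separate gauge on $Z = G \setminus G^{o}$, and then to run the argument against the pointwise minimum of the two gauges. The target bound will then follow from the triangle inequality together with the choice $\varepsilon/2$ for each piece.

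First I would apply Lemma \ref{L2.1} with $\varepsilon/2$ in place of $\varepsilon$. This produces a gauge $\delta_{1}$ on $Z = G \setminus G^{o}$ such that every $\delta_{1}$-fine $Z$-tagged $\mathcal{M}$-partition (or $\mathcal{HK}$-partition) $\pi$ in $I_{0}$ satisfies $|| \sum_{(t,I) \in \pi} f(t)|I| || < \varepsilon/2$. This disposes of the first piece with no extra work.

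Next I would invoke the hypothesis that $F$ has $\mathcal{M}$-negligible (or $\mathcal{HK}$-negligible) variation outside of $G^{o}$, again with $\varepsilon/2$. By definition this yields a gauge $\delta_{2}$ on $(G^{o})^{c} = \mathbb{R}^{m} \setminus G^{o}$ such that, whenever $(I_{k}) \in \mathscr{D}_{G^{o}}$, every $(G^{o})^{c}$-tagged $\delta_{2}$-fine partition $\pi$ in $\mathbb{R}^{m}$ has each inner series $\sum_{k:|I \cap I_{k}|>0} F(I \cap I_{k})$ unconditionally convergent and satisfies $|| \sum_{(t,I) \in \pi} \left( \sum_{k:|I \cap I_{k}|>0} F(I \cap I_{k}) \right) || < \varepsilon/2$. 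The one point of care here is the discrepancy of domains: the hypothesis delivers a gauge on the larger set $(G^{o})^{c}$, whereas the conclusion of the lemma concerns the smaller set $Z = G \setminus G^{o}$. Since $Z = G \cap (G^{o})^{c} \subset (G^{o})^{c}$, I can restrict $\delta_{2}$ to $Z$; moreover every $Z$-tagged partition is automatically $(G^{o})^{c}$-tagged and every partition in $I_{0}$ is a partition in $\mathbb{R}^{m}$, so the negligible-variation conclusion transfers verbatim to the partitions considered in the lemma.

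Finally I would set $\delta = \min(\delta_{1}, \delta_{2})$ on $Z$, which is again a gauge. Any $\delta$-fine $Z$-tagged partition $\pi$ in $I_{0}$ is then simultaneously $\delta_{1}$-fine and $\delta_{2}$-fine, so both bounds above are in force. The inner series are unconditionally convergent by the first clause of the negligible-variation conclusion, so the expression is well-defined, and the triangle inequality
$$
|| \sum_{(t,I) \in \pi} \left( f(t)|I| - \sum_{k:|I \cap I_{k}|>0} F(I \cap I_{k}) \right) || \le || \sum_{(t,I) \in \pi} f(t)|I| || + || \sum_{(t,I) \in \pi} \left( \sum_{k:|I \cap I_{k}|>0} F(I \cap I_{k}) \right) ||
$$
bounds it by $\varepsilon/2 + \varepsilon/2 = \varepsilon$. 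I do not expect a genuine obstacle: the decomposition is forced, and the only real care lies in the bookkeeping of the gauge domains and in checking that a $Z$-tagged partition in $I_{0}$ is admissible simultaneously for Lemma \ref{L2.1} and for the negligible-variation hypothesis, as noted above.
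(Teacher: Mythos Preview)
Your proof is correct and follows essentially the same approach as the paper's: obtain one gauge from Lemma~\ref{L2.1} and another from the $\mathcal{M}$- (or $\mathcal{HK}$-)negligible variation hypothesis, each with tolerance $\varepsilon/2$, take their pointwise minimum on $Z$, and apply the triangle inequality. You are in fact slightly more careful than the paper in spelling out why the gauge from the negligible-variation hypothesis, defined on $(G^{o})^{c}$, restricts appropriately to $Z$ and why a $Z$-tagged partition in $I_{0}$ is admissible for that hypothesis.
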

\begin{proof}
Since $F$ has $\mathcal{M}$-negligible ($\mathcal{HK}$-negligible) variation 
outside of $G^{o}$, 
given $\varepsilon >0$ there exists a gauge $\delta_{v}$ 
on $(G^{o})^{c}$  
such that for each $\delta$-fine $(G^{o})^{c}$-tagged $\mathcal{M}$-partition ($\mathcal{HK}$-partition) 
$\pi$ in $I_{0}$, 
we have
$$
|| \sum_{(t,I) \in \pi} 
\left ( \sum_{k:|I \cap I_{k}| >0} F(I \cap I_{k})~ \right ) 
|| < \frac{\varepsilon}{2},
$$ 
whenever $(I_{k}) \in \mathscr{D}_{G^{o}}$. 

By Lemma \ref{L2.1}, 
there exists a gauge $\delta_{0}$ on $Z$ 
such that for each $\delta_{0}$-fine $Z$-tagged $\mathcal{M}$-partition ($\mathcal{HK}$-partition) 
$\pi$ in $I_{0}$, we have
$$
|| \sum_{(t,I) \in \pi} f(t)|I| ~|| < \frac{\varepsilon}{2}.
$$ 
Define a gauge $\delta$ on $Z$ by $\delta(t) = \min\{\delta_{v}(t), \delta_{0}(t)\}$ for all $t \in Z$. 
Let $\pi$ be a $\delta$-fine $Z$-tagged $\mathcal{M}$-partition ($\mathcal{HK}$-partition) $\pi$ in $I_{0}$. 
Then,
\begin{equation*}
\begin{split}
|| \sum_{(t,I) \in \pi} 
\left ( ~f(t)|I| - \sum_{k:|I \cap I_{k}| >0} F(I \cap I_{k})~ \right ) 
||
& \leq	
|| \sum_{(t,I) \in \pi} f(t)|I| ~|| \\
&+
|| \sum_{(t,I) \in \pi} 
\left (\sum_{k:|I \cap I_{k}| >0} F(I \cap I_{k})~ \right ) 
|| < \frac{\varepsilon}{2} + \frac{\varepsilon}{2} = \varepsilon,
\end{split}
\end{equation*}
and this ends the proof.
\end{proof}

\begin{theorem}\label{T2.1}
Let $f: G \to X$ be a function and let $F: \mathcal{I}_{G} \to X$ be an additive interval function.  
Then, the following statements are equivalent:
\begin{itemize}
\item[(i)]
$f$ is Dunford-McShane integrable on $G$ with the primitive $F$,
\item[(ii)] 
$f_{0}$ is McShane integrable on $I_{0}$ with the primitive $F_{0}$ 
such that $F_{0}(I) = F(I)$, for all $I \in \mathcal{I}_{G}$,
\item[(iii)]
$F$ is a Dunford-function and has $\mathcal{M}$-negligible variation outside of $G^{o}$, 
and given any division $(C_{k}) \in \mathscr{D}_{G^{o}}$, 
each $f_{k}$ is McShane  integrable on $C_{k}$ with the primitive $F_{k}$. 
\end{itemize}
\end{theorem}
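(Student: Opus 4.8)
The plan is to establish the cycle of implications (ii) $\Rightarrow$ (i) $\Rightarrow$ (iii) $\Rightarrow$ (ii). Throughout I would exploit two structural facts about a McShane primitive on $I_0$: it extends to a countably additive vector measure on $\mathcal{L}(I_0)$ that is absolutely continuous with respect to $\lambda$, and it satisfies the Saks--Henstock lemma. I would also use repeatedly that $|G\setminus G^o|=0$, so that $I\setminus G^o$ is Lebesgue-null for every $I\in\mathcal{I}_G$, and that for $(C_k)\in\mathscr{D}_{G^o}$ each cube satisfies $C_k\subset G^o$ because $\bigcup_k C_k=G^o$.

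For (ii) $\Rightarrow$ (i) I would verify the three defining conditions of Definition \ref{DH} in turn. The gauge for the first condition is obtained by restricting to $G^o$ the Saks--Henstock gauge of $f_0$ and noting that a partition in $G^o$ has tags in $G^o\subset G$ and intervals in $\mathcal{I}_{G^o}\subset\mathcal{I}_G$, where $f_0=f$ and $F_0=F$. That $F$ is a Dunford-function follows from countable additivity: for a division $(I_k)$ and any $I\in\mathcal{I}$ the intervals $I\cap I_k$ are non-overlapping subsets of $G$, so $\sum_k F(I\cap I_k)=F_0(\bigcup_k(I\cap I_k))$ converges unconditionally, and when $(I_k)\in\mathscr{D}_{G^o}$ this union is $I\cap G^o$, whence $\sum_k F(I\cap I_k)=F_0(I)-F_0(I\setminus G^o)=F_0(I)=F(I)$ by absolute continuity and nullity of $I\setminus G^o$. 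Applying the same additivity together with the Saks--Henstock lemma to $h=f_0\cdot\mathbbm{1}_{G^o}$ (McShane integrable by Theorem 4.1.6 in \cite{SCH1}, with primitive $I\mapsto F_0(I\cap G^o)=\sum_k F(I\cap I_k)$) and using $h(t)=0$ for tags $t\in(G^o)^c$ yields the $\mathcal{M}$-negligible variation outside $G^o$.

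For (i) $\Rightarrow$ (iii) the two global conditions are already contained in (i), so only the local integrability needs proof. Since $C_k\subset G^o$, every $\mathcal{M}$-partition of $C_k$ is a partial $\mathcal{M}$-partition in $G^o$; feeding it to the first condition of (i) and using finite additivity of $F$ (so that $\sum_{(t,I)}F(I)=F(C_k)$ over a partition of $C_k$) shows that the McShane sums of $f_k$ on $C_k$ lie within $\varepsilon$ of $F(C_k)=F_k(C_k)$, and running the argument on subintervals identifies the full primitive with $F_k$.

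The implication (iii) $\Rightarrow$ (ii) is the main obstacle and carries the real work. I would fix $(C_k)\in\mathscr{D}_{G^o}$ and propose $x_f=\sum_k F(C_k)$, a series that is unconditionally convergent because $F$ is a Dunford-function. The difficulty is to assemble a single gauge $\delta$ on $I_0$ from the local gauges $\delta_k$ (for $f_k$ on $C_k$, chosen for tolerance $\varepsilon/(3\cdot 2^k)$), the negligible-variation gauge on $(G^o)^c$, and the null-set gauge of Lemma \ref{L2.1} for $Z=G\setminus G^o$. For a tag $t\in G^o$ I would set $J(t)=\{j:t\in C_j\}$ and take $\delta(t)\le\min_{j\in J(t)}\delta_j(t)$ small enough that $B_m(t,\delta(t))$ stays inside $G^o$ and meets only the cubes of $J(t)$. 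The key observation, which disposes of tags lying on the grid $\bigcup_k\partial C_k$ where an interval may straddle several cubes, is that every cube $C_j$ met by such an interval $I$ still contains $t$ in its closure; hence each piece $(t,I\cap C_j)$ is a legitimate $\delta_j$-fine $C_j$-tagged interval, and after writing $f(t)|I|-\sum_j F(I\cap C_j)=\sum_j(f_j(t)|I\cap C_j|-F_j(I\cap C_j))$ and regrouping by $j$, the Saks--Henstock lemma for each $f_j$ bounds the inside-$G^o$ part by $\sum_j\varepsilon/(3\cdot 2^j)=\varepsilon/3$. Setting $S(t,I)=\sum_{k:|I\cap C_k|>0}F(I\cap C_k)$, I would check the bookkeeping identity $\sum_{(t,I)\in\pi}S(t,I)=\sum_k F(C_k)=x_f$ (swap the finite outer sum with the unconditionally convergent inner series, using that for fixed $k$ the pieces $I\cap C_k$ tile $C_k$), so the error equals $\sum_{(t,I)\in\pi}(f_0(t)|I|-S(t,I))$. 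The tags in $(G^o)^c$ are then controlled by splitting this into $\|\sum f_0(t)|I|\|$, bounded via Lemma \ref{L2.1}, and $\|\sum S(t,I)\|$, bounded directly by the $\mathcal{M}$-negligible variation of $F$ outside $G^o$; the three $\varepsilon/3$ estimates combine to give McShane integrability of $f_0$ on $I_0$ with integral $x_f$, and the same argument on subintervals identifies $F_0$ with $F$ on $\mathcal{I}_G$. The crux is precisely this gauge engineering, which simultaneously forces interior-tagged intervals into a single cube, routes grid-tagged interval-pieces into the correct adjacent cubes, and isolates the null boundary $G\setminus G^o$ and the exterior $(G^o)^c$ for the two negligible-variation estimates.
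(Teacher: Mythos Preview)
Your proposal is correct and follows essentially the same route as the paper: the same three implications, with the substantive step (iii)$\Rightarrow$(ii) carried out by identical gauge engineering (local Saks--Henstock gauges on the $C_k$ merged with the negligible-variation gauge on $(G^o)^c$ and the null-set gauge from Lemma~\ref{L2.1}, the partition then being split according to whether tags lie in cube interiors, on the grid $\bigcup_k\partial C_k$, in $G\setminus G^o$, or outside $G$). Your (ii)$\Rightarrow$(i) is marginally slicker---you invoke the vector-measure structure of the McShane primitive and read off $\mathcal{M}$-negligible variation directly from Saks--Henstock for $h=f_0\mathbbm{1}_{G^o}$, whereas the paper works with $f_0$ itself and separately splits tags into $I_0\setminus G$ and $G\setminus G^o$---but this is cosmetic, not a genuinely different route.
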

\begin{proof}
$(i) \Rightarrow (iii)$ 
Assume that $f$ is Dunford-McShane integrable on $G$ with the primitive $F$ 
and let $(C_{k})$ be any division of $G^{o}$. 
Then, given $\varepsilon >0$ there exists a gauge $\delta$ on $G^{o}$ such that 
for each $\delta$-fine $\mathcal{M}$-partition $\pi$ in $G^{o}$, we have
\begin{equation*}
|| \sum_{(t,I) \in \pi } (~ f(t)|I| - F(I)~) || < \varepsilon.   
\end{equation*}

By Definition \ref{DH},
$F$ is a Dunford-function and has $\mathcal{M}$-negligible variation outside of $G^{o}$.  
Thus, it remains to prove that each $f_{k}$ 
is McShane integrable on $C_{k}$ 
with the primitive $F_{k}$. 
Let $\pi_{k}$ be a $\delta_{k}$-fine $\mathcal{M}$-partition of $C_{k}$, 
where $\delta_{k} = \delta \vert_{C_{k}}$. 
Then, $\pi_{k}$ is a $\delta$-fine $\mathcal{M}$-partition in $G^{o}$
and, therefore, 
\begin{equation*}
\begin{split}
||\sum_{(t,I) \in \pi_{k}} (~  f_{k}(t). |I|-F_{k}(I) ~)|| 
&= || \sum_{(t,I) \in \pi_{k}} (~ f(t). |I| - F(I) ~) || < \varepsilon. 
\end{split} 
\end{equation*}
This means that $f_{k}$ is McShane integrable on $C_{k}$ with the primitive $F_{k}$.

$(iii) \Rightarrow (ii)$ 
Assume that $(iii)$ holds. Let $\varepsilon >0$ and 
let $(C_{k}) \in \mathscr{D}_{G^{o}}$. 
Then, since  
each function $f_{k}$ is McShane integrable on $C_{k}$ 
with the primitive $F_{k}$,  
by Lemma 3.4.2 in \cite{SCH1}, 
there exists a gauge $\delta_{k}$ on $C_{k}$ such that 
for each $\delta_{k}$-fine $\mathcal{M}$-partition $\pi_{k}$ in $C_{k}$, we have
\begin{equation}\label{eq_SaksHen}
||\sum_{(t,I) \in \pi_{k}} (~ f_{k}(t)|I| - F_{k}(I)~) || \leq 
\frac{1}{2^{k}}\frac{\varepsilon}{4}.  
\end{equation}

Note that for any $t \in  G^{o} = \cup_{k} C_{k}$, we have the following possible cases:
\begin{itemize}
\item
there exists $i_{0} \in \mathbb{N}$ such that $t \in (C_{i_{0}})^{o}$, 
\item
there exists $j_{0} \in \mathbb{N}$ such that $t \in C_{j_{0}} \setminus (C_{j_{0}})^{o}$. 
In this case, 
there exists a finite set 
$\mathcal{N}_{t} = \{j \in \mathbb{N}: t \in C_{j} \setminus (C_{j})^{o} \}$ such that 
$t \in \bigcap_{j \in \mathcal{N}_{t}} C_{j}$ and $t \notin C_{k}$,  
for all $k \in \mathbb{N} \setminus \mathcal{N}_{t}$. 
Hence, $t \in ( \cup_{j \in \mathcal{N}_{t}} C_{j} )^{o}$. 
\end{itemize} 
For each $k \in \mathbb{N}$, choose $\delta_{k}$ so that for any $t \in G^{o}$, we have
$$
t \in (C_{k})^{0} \Rightarrow B_{m}(t, \delta_{k}(t)) \subset C_{k}
$$
and
$$
t \in C_{k} \setminus (C_{k})^{o} \Rightarrow  
B_{m}(t, \delta_{k}(t)) \subset \bigcup_{j \in \mathcal{N}_{t}} C_{j}.
$$

Since $F$ has $\mathcal{M}$-negligible variation outside of $G^{o}$, 
there exists a gauge $\delta_{v}$ on $I_{0} \setminus G^{o}$ 
such that for each $(I_{0} \setminus G^{o})$-tagged 
$\delta_{v}$-fine $\mathcal{M}$-partition $\pi_{v}$ in $I_{0}$, we have
\begin{equation}\label{eq_Neg_Variation2}
||\sum_{(t,I) \in \pi_{v}} 
\left (  \sum_{k:|I \cap C_{k}|>0}  F(I \cap C_{k}) \right ) || < 
\frac{\varepsilon}{4}. 
\end{equation}

By Lemma \ref{L2.2}, we can choose $\delta_{v}$ so that 
for each $(G \setminus G^{o})$-tagged 
$\delta_{v}$-fine $\mathcal{M}$-partition $\pi$ in $I_{0}$, we have
\begin{equation}\label{eq_Neg_Variation3}
|| \sum_{(t,I) \in \pi} 
\left ( ~f(t)|I| - \sum_{k:|I \cap C_{k}| >0} F(I \cap C_{k})~ \right ) 
|| < \frac{\varepsilon}{4}.
\end{equation}

By  hypothesis, we have also that $F$ is a Dunford-function.  
Therefore, we can define an additive interval function 
$F_{0}: \mathcal{I}_{I_{0}} \to X$ as follows
\begin{equation}\label{eqF0} 
F_{0}(I) = \sum_{k: |I \cap C_{k}|>0} F(I \cap C_{k}), 
\text{ for all } I \in \mathcal{I}_{I_{0}}. 
\end{equation} 
Clearly, 
$F_{0}(I) = F(I)$, for all $I \in \mathcal{I}_{G}$. 
We will show that $f_{0}$ is McShane integrable on $I_{0}$ 
with the primitive $F_{0}$. 
To see this, we first define a gauge $\delta_{0} : I_{0} \to (0,+\infty)$ as follows.  
For each $t \in G^{o}$, we choose 
$$
\delta_{0}(t) 
= 
\left \{
\begin{array}{ll}
\delta_{i_{0}}(t) & \text{ if }t \in (C_{i_{0}})^{o}  \\
\min \{ \delta_{j}(t): j \in \mathcal{N}_{t} \} & \text{ otherwise},
\end{array}
\right.
$$
while for $t \in I_{0} \setminus G^{o}$, $\delta_{0}(t)= \delta_{v}(t)$.
Let $\pi$ be an arbitrary $\delta_{0}$-fine $\mathcal{M}$-partition of $I_{0}$. 
Then, 
$$
\pi = \pi_{1} \cup \pi_{2} \cup \pi_{3} \cup \pi_{4},
$$ 
where 
\begin{equation*}
\begin{split}
\pi_{1} 
&= \{ (t,I) \in \pi : (\exists i_{0} \in \mathbb{N})[t \in (C_{i_{0}})^{o}] \} \\
\pi_{2} 
&= \{(t,I) \in \pi : 
(\exists j_{0} \in \mathbb{N})[t \in C_{j_{0}} \setminus (C_{j_{0}})^{o}] \}
\end{split}
\end{equation*}
and
\begin{equation*}
\begin{split}
\pi_{3} 
&= \{(t,I) \in \pi : t \in G \setminus G^{o} \} \\
\pi_{4} 
&= \{(t,I) \in \pi : t \in I_{0} \setminus G \}.
\end{split}
\end{equation*}
Hence,    
\begin{equation}\label{eq_SS.1}
\begin{split}
||\sum_{(t,I) \in \pi} (~f_{0}(t)|I| - F_{0}(I)~) &|| 
\leq
||\sum_{(t,I) \in \pi_{1}} (~f(t)|I| - F(I)~) || \\ 
+ 
||\sum_{(t,I) \in \pi_{2}} (~f(t)|I| - F(I)~) || 
+&
||\sum_{(t,I) \in \pi_{3}} (~f(t)|I| - F_{0}(I)~)||
+
||\sum_{(t,I) \in \pi_{4}}  F_{0}(I)||.
\end{split}
\end{equation} 
Note that, if we define
\begin{equation*}
\begin{split}
\pi_{1}^{k} &= \{(t, I) : (t,I) \in \pi_{1}, t \in (C_{k})^{o} \}, \\ 
\pi_{2}^{k} &= \{(t, I \cap C_{k}) : (t,I) \in \pi_{2}, 
t \in C_{k} \setminus (C_{k})^{o}, |I \cap C_{k}|>0 \},
\end{split}
\end{equation*}
then $\pi_{1}^{k}$ and $\pi_{2}^{k}$ are $\delta_{k}$-fine $\mathcal{M}$-partitions in $C_{k}$. 
Therefore, by \eqref{eq_SaksHen}, it follows that
\begin{equation}\label{eq_SS.2}
\begin{split}
||\sum_{(t,I) \in \pi_{1}} (~f(t)|I| - F(I)~) || 
&=
||\sum_{k} \sum_{\underset{t \in (C_{k})^{0}}{(t,I) \in \pi_{1}}} 
(~f(t)|I| - F(I)~)|| \\
&\leq
\sum_{k} ||\sum_{(t,I) \in \pi_{1}^{k}} 
(~f_{k}(t)|I| - F_{k}(I)~)|| \\
&\leq
\sum_{k=1}^{+\infty} \frac{1}{2^{k}}\frac{\varepsilon}{4} = \frac{\varepsilon}{4}
\end{split}
\end{equation}
and  
\begin{equation}\label{eq_SS.3}
\begin{split}
||\sum_{(t,I) \in \pi_{2}} (~f(t)|I| - F(I)~) || 
&= 
||\sum_{(t,I) \in \pi_{2}} 
\left ( \sum_{\underset{| I \cap C_{j}| >0}{j \in \mathcal{N}_{t}}} 
(~ f(t) . | I \cap C_{j}| - F(I \cap C_{j}) ~) \right ) || \\
=& 
||\sum_{(t,I) \in \pi_{2}} 
\left ( \sum_{\underset{| I \cap C_{j}| >0}{j \in \mathcal{N}_{t}}}
( f_{j}(t) . | I \cap C_{j}| - F_{j}(I \cap C_{j}) ) \right ) || \\
=&
||\sum_{k} 
\left ( 
\sum_{(t,I) \in \pi_{2}^{k}} 
(~f_{k}(t) . | I \cap C_{k}| - F_{k}(I \cap C_{k})~)  
\right ) ||\\
\leq&
\sum_{k} 
||
\left ( 
\sum_{(t,I) \in \pi_{2}^{k}} 
(~f_{k}(t) . | I \cap C_{k}| - F_{k}(I \cap C_{k})~)  
\right ) || \\
\leq&
\sum_{k=1}^{+\infty} \frac{1}{2^{k}}\frac{\varepsilon}{4} = \frac{\varepsilon}{4}.
\end{split}
\end{equation}

We have also that $\pi_{3}$ is a $(G \setminus G^{o})$-tagged 
$\delta_{v}$-fine $\mathcal{M}$-partition in $I_{0}$ 
and 
$\pi_{4}$ is $(I_{0} \setminus G)$-tagged 
$\delta_{v}$-fine $\mathcal{M}$-partition in $I_{0}$. 
Therefore, \eqref{eq_Neg_Variation2} and \eqref{eq_Neg_Variation3} together with 
\eqref{eqF0} yield  
$$
||\sum_{(t,I) \in \pi_{3}} (~f(t)|I| - F_{0}(I)~)|| < \frac{\varepsilon}{4} 
\quad \text{ and } \quad
|| \sum_{(t,I) \in \pi_{4}}  F_{0}(I) ||   < \frac{\varepsilon}{4}.
$$ 

The last result together with \eqref{eq_SS.1}, \eqref{eq_SS.2} and 
\eqref{eq_SS.3} yields
$$
||\sum_{(t,I) \in \pi} (~f_{0}(t) . |I| - F_{0}(I)~)|| < \varepsilon,
$$ 
and since 
$\pi$ was an arbitrary $\delta_{0}$-fine $\mathcal{M}$-partition of $I_{0}$, 
it follows that $f_{0}$ is McShane integrable on $I_{0}$ 
with the primitive $F_{0}$.

$(ii) \Rightarrow (i)$ 
Assume that $(ii)$ holds.   
Then, by Lemma 3.4.2 in \cite{SCH1}, 
given $\varepsilon >0$ there exists a gauge $\delta_{0}$ on $I_{0}$ such that 
for each $\delta_{0}$-fine $\mathcal{M}$-partition $\pi_{0}$ in $I_{0}$, we have
\begin{equation}\label{eq_SaksMM}
||\sum_{(t,I) \in \pi } (~ f_{0}(t)|I| - F_{0}(I) ~)|| < \varepsilon.   
\end{equation}
The gauge $\delta_{0}$ can be chosen 
so that for each $t \in I_{0}$, we have
$$
t \in G^{o} \Rightarrow B_{m}(t,\delta_{0}(t)) \subset G^{o}.
$$
By Lemma \ref{L2.1}, we can also choose $\delta_{0}$ 
so that 
\begin{equation}\label{eq_SaksMMM}
||\sum_{(t,I) \in \pi } f_{0}(t)|I|~|| < \varepsilon,   
\end{equation}
whenever $\pi$ is $(G \setminus G^{o})$-tagged $\delta_{0}$-fine 
$\mathcal{M}$-partition in $I_{0}$.

Hence, if we define $\delta = \delta_{0} \vert_{G^{o}}$, 
then for each 
$\delta$-fine $\mathcal{M}$-partition $\pi$ in $G^{o}$, we have
\begin{equation*}
||\sum_{(t,I) \in \pi } (~ f(t)|I| - F(I) ~)|| < \varepsilon.   
\end{equation*}
Thus, it remains to show that $F$ is a Dunford-function 
and has $\mathcal{M}$-negligible variation outside of $G^{o}$.

We first show that $F$ is a Dunford-function. 
Let $(I_{k}) \in \mathscr{P}_{G}$.

Since for any $I \in \mathcal{I}$, we have
\begin{equation*}
\begin{split}
\sum_{k:| I \cap I_{k}| >0}  F(I \cap I_{k}) 
=& \sum_{k:| I \cap I_{k}| >0}  F_{0}(I \cap I_{k}) 
= \sum_{k:| I \cap I_{k}| >0}  (M) \int_{I \cap I_{k}} f_{0} d \lambda \\
=& (M) \int_{I \cap ( \cup_{k} I_{k} )} f_{0} d \lambda,  
\end{split}
\end{equation*}
the series
$\sum_{k:| I \cap I_{k}| >0}  F(I \cap I_{k})$ is unconditionally  convergent in $X$.

If $(I_{k}) \in \mathscr{D}_{G^{o}}$ and $I \in \mathcal{I}_{G}$,  
then  
\begin{equation*}
\begin{split}
\sum_{k:| I \cap I_{k}| >0}  F(I \cap I_{k}) 
&=(M) \int_{I \cap G^{o}} f_{0} d \lambda =(M) \int_{I \cap G} f_{0} d \lambda
\\
&= (M) \int_{I} f_{0} d \lambda 
= F_{0}(I) = F(I). 
\end{split}
\end{equation*} 
Thus, $F$ is a Dunford-function.

Finally, we show that $F$ has $\mathcal{M}$-negligible variation outside of $G^{o}$.
To see this, 
we define a gauge $\delta_{v}$ on $(G^{o})^{c}$ 
by $\delta_{v}(t) = \delta_{0}(t)$ if $t \in I_{0} \setminus G^{o}$, 
while for $t \notin I_{0}$, we choose $\delta_{v}(t)$ so that 
$B_{m}(t, \delta_{v}(t) ) \cap I_{0} = \emptyset$. 
Assume that $\pi_{v}$ is a  $(G^{o})^{c}$-tagged $\delta_{v}$-fine 
$\mathcal{M}$-partition in $\mathbb{R}^{m}$. 
Hence, 
$$
\pi_{0} = 
\{ (t,I \cap I_{0}): (t,I) \in \pi_{v}, t \in I_{0} \setminus G^{o}, |I \cap I_{0}| >0 \}
$$
is a $\delta_{0}$-fine $\mathcal{M}$-partition in $I_{0}$. 
Note that $\pi_{0} = \pi_{a} \cup \pi_{b}$, where
$$
\pi_{a} = 
\{ (t,J) \in  \pi_{0}: t \in I_{0} \setminus G \}
\text{ and }
\pi_{b} = 
\{ (t,J) \in  \pi_{0}: t \in (G \setminus G^{o})\}.
$$
Since $\pi_{a}$ and $\pi_{b}$ are $\delta_{0}$-fine $\mathcal{M}$-partitions in $I_{0}$, 
by \eqref{eq_SaksMM} and \eqref{eq_SaksMMM},  
it follows that
\begin{equation*}
\begin{split} 
&|| \sum_{(t,J) \in \pi_{0}}   F_{0}(J) || 
\leq
|| \sum_{(t,J) \in \pi_{a}}   F_{0}(J) ||  + 
|| \sum_{(t,J) \in \pi_{b}}   F_{0}(J) || \\
&\leq
|| \sum_{(t,J) \in \pi_{a}}   (f_{0}(t)|J| - F_{0}(J))  ||  + 
|| \sum_{(t,J) \in \pi_{b}}   (f_{0}(t)|J| - F_{0}(J))  || \\
&+|| \sum_{(t,J) \in \pi_{b}}   f_{0}(t)|J|~ || < 3\varepsilon.
\end{split}
\end{equation*} 
On the other hand, we have also
\begin{equation*}
\begin{split} 
|| \sum_{(t,J) \in \pi_{0}}   F_{0}(J) || 
&= || \sum_{(t,J) \in \pi_{0}} (M)\int_{J} f_{0}d \lambda || \\
&=|| \sum_{(t,J) \in \pi_{0}} 
\left ( 
(M)\int_{J \setminus G} f_{0}d \lambda  + 
(M)\int_{J \cap (G \setminus G^{o})} f_{0}d \lambda +
(M)\int_{J \cap G^{o}} f_{0}d \lambda \right ) || \\
&=|| \sum_{(t,J) \in \pi_{0}} 
(M)\int_{J \cap G^{o}} f_{0}d \lambda  || 
=  
|| \sum_{(t,J) \in \pi_{0}} 
\left ( \sum_{k=1}^{+\infty} (M)\int_{J \cap I_{k}} f_{0}d \lambda  \right )||\\
&=|| \sum_{(t,J) \in \pi_{0}} 
\left ( \sum_{k:|J \cap I_{k}|>0} F_{0}(J \cap I_{k}) \right )|| 
=|| \sum_{(t,J) \in \pi_{0}} 
\left ( \sum_{k:|J \cap I_{k}|>0} F(J \cap I_{k}) \right )||\\
&=|| \sum_{(t,I) \in \pi_{v}} \left ( \sum_{k:|I \cap I_{k}|>0} F(I \cap I_{k}) \right )||,
\end{split}
\end{equation*} 
whenever $(I_{k}) \in \mathscr{D}_{G^{o}}$. 
It follows that
\begin{equation*}
\begin{split} 
|| \sum_{(t,I) \in \pi_{v}} \left ( \sum_{k:|I \cap I_{k}|>0} F(I \cap I_{k}) \right )|| 
< 3\varepsilon. 
\end{split}
\end{equation*} 
This means that $F$ has $\mathcal{M}$-negligible  
variation outside of $G^{o}$, 
and this ends the proof. 
\end{proof}

Theorem \ref{T2.1} together with Theorem 3(c) in \cite{GUO1} 
yields immediately the following statement. 

\begin{corollary}\label{C2.1}
Let $f: G \to X$ be a Dunford-McShane integrable function on $G$ with the primitive $F$ 
and 
let $h: G \to X$ be a Dunford-McShane integrable function on $G$ with the primitive $H$ 
Then, 
\begin{itemize}
\item[(i)]
$f + h$ is Dunford-McShane integrable function on $G$ with the primitive $F + H$,
\item[(ii)] 
$r.f$ is Dunford-McShane integrable function on $G$ with the primitive $r.F$, 
where $r \in \mathbb{R}$.  
\end{itemize}
\end{corollary}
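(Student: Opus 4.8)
The plan is to reduce both claims to the linearity of the ordinary McShane integral by passing through the equivalence $(i) \Leftrightarrow (ii)$ established in Theorem \ref{T2.1}. The point is that Dunford-McShane integrability on $G$ is, up to the correspondence $F \mapsto F_{0}$, nothing but McShane integrability of the extension on the fixed interval $I_{0}$, and the latter integral is already known to be linear.

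First I would apply Theorem \ref{T2.1} to each of the given functions. Since $f$ is Dunford-McShane integrable on $G$ with primitive $F$, statement $(ii)$ gives that the extension $f_{0} : I_{0} \to X$ is McShane integrable on $I_{0}$ with primitive $F_{0}$ satisfying $F_{0}(I) = F(I)$ for all $I \in \mathcal{I}_{G}$; likewise $h_{0}$ is McShane integrable on $I_{0}$ with primitive $H_{0}$ satisfying $H_{0}(I) = H(I)$ for all $I \in \mathcal{I}_{G}$. Next I would record the elementary identities $(f+h)_{0} = f_{0} + h_{0}$ and $(r.f)_{0} = r.f_{0}$, both of which hold because every one of these extensions vanishes on $I_{0} \setminus G$ and agrees with the corresponding function on $G$.

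Then I would invoke the linearity of the McShane integral, Theorem 3(c) in \cite{GUO1}: the function $f_{0} + h_{0}$ is McShane integrable on $I_{0}$ with primitive $F_{0} + H_{0}$, and $r.f_{0}$ is McShane integrable on $I_{0}$ with primitive $r.F_{0}$. Restricting these primitives to $\mathcal{I}_{G}$ yields, for all $I \in \mathcal{I}_{G}$, the equalities $(F_{0}+H_{0})(I) = F(I) + H(I) = (F+H)(I)$ and $(r.F_{0})(I) = r.F(I) = (r.F)(I)$. Applying the implication $(ii) \Rightarrow (i)$ of Theorem \ref{T2.1} to $(f+h)_{0} = f_{0}+h_{0}$ and to $(r.f)_{0} = r.f_{0}$ then gives exactly $(i)$ and $(ii)$ of the corollary.

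I expect no genuine obstacle here; the only thing demanding care is the bookkeeping of primitives. One must note that $F+H$ and $r.F$ are again additive interval functions on $\mathcal{I}_{G}$, and, more importantly, that they coincide on $\mathcal{I}_{G}$ with the restrictions of the McShane primitives $F_{0}+H_{0}$ and $r.F_{0}$ of the combined extensions. Once this identification is in place, Theorem \ref{T2.1} delivers both conclusions directly, so the work is entirely in matching primitives rather than in any new integration-theoretic estimate.
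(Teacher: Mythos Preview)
Your proposal is correct and follows exactly the route the paper indicates: the corollary is stated as an immediate consequence of Theorem \ref{T2.1} together with Theorem 3(c) in \cite{GUO1}, and you have simply spelled out the passage through the extensions $f_{0}$, $h_{0}$ and the matching of primitives that this entails.
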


The next theorem follows from Theorem \ref{T2.1} with $G = I_{0}$. 

\begin{theorem}\label{T2.11}
Let $f: I_{0} \to X$ be a function and let $F: \mathcal{I}_{I_{0}} \to X$ be an additive interval function.  
Then, $f$ is McShane integrable on $I_{0}$ with the primitive $F$ if and only if we have
\begin{itemize}
\item
for each $\varepsilon >0$ there exists a gauge $\delta_{\varepsilon}$ on $(I_{0})^{o}$ such that  
for each $\delta_{\varepsilon}$-fine $\mathcal{M}$-partition $\pi$ in 
$(I_{0})^{o}$, we have
$$
|| \sum_{(t,I) \in \pi } 
( f(t)|I| - F(I) ) || < \varepsilon,   
$$
\item
$F$ is a Dunford-function, 
\item
$F$ has $\mathcal{M}$-negligible variation on $Z = I_{0} \setminus (I_{0})^{o}$.
\end{itemize}
\end{theorem}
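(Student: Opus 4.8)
The plan is to obtain Theorem \ref{T2.11} as a direct specialization of Theorem \ref{T2.1} to the degenerate case $G = I_0$, in which the bounded set coincides with its enclosing interval. First I would record the simplifications occurring here. Since $I_0 \setminus G = I_0 \setminus I_0 = \emptyset$, the extension $f_0 : I_0 \to X$ is just $f$, and $\mathcal{I}_G = \mathcal{I}_{I_0}$, so the interval function $F_0$ of Theorem \ref{T2.1}(ii) coincides with $F$. Consequently statement (ii) of Theorem \ref{T2.1} reads verbatim ``$f$ is McShane integrable on $I_0$ with the primitive $F$'', which is precisely the left-hand side of the desired equivalence.

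Next I would unpack statement (i) of Theorem \ref{T2.1}, that $f$ is Dunford-McShane integrable on $I_0$ with primitive $F$. By Definition \ref{DH} with $E = I_0$ (so $E^o = (I_0)^o$), this is exactly the conjunction of the three bullet conditions in the present statement, up to one cosmetic discrepancy: Definition \ref{DH} requires $F$ to have $\mathcal{M}$-negligible variation \emph{outside} of $(I_0)^o$, i.e.\ over $((I_0)^o)^c = \mathbb{R}^m \setminus (I_0)^o$, whereas the theorem asks only for $\mathcal{M}$-negligible variation over the boundary $Z = I_0 \setminus (I_0)^o$. Thus the proof reduces to showing that these two requirements are equivalent.

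This equivalence is the only substantive step, and the one I would treat with care. Because $Z \subset ((I_0)^o)^c$, one direction is immediate: restricting to $Z$ a gauge that witnesses negligible variation over $((I_0)^o)^c$ works, since every $Z$-tagged $\delta$-fine partition is already $((I_0)^o)^c$-tagged. For the converse I would take a gauge $\delta$ on $Z$ furnished by negligible variation over $Z$ and extend it to all of $((I_0)^o)^c$ by choosing, for each exterior tag $t \in \mathbb{R}^m \setminus I_0$, the radius $\delta(t)$ so small that $B_m(t,\delta(t)) \cap I_0 = \emptyset$. Given a $((I_0)^o)^c$-tagged $\delta$-fine $\mathcal{M}$-partition $\pi$ and any $(I_k) \in \mathscr{D}_{(I_0)^o}$, its tagged intervals split into those tagged on $Z$ and those tagged in the exterior $\mathbb{R}^m \setminus I_0$; for the latter, $I \subset B_m(t,\delta(t))$ forces $I \cap I_0 = \emptyset$, and since each $I_k \subset (I_0)^o \subset I_0$ we get $|I \cap I_k| = 0$, so every such inner sum vanishes. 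What survives is a $Z$-tagged $\delta$-fine sub-partition, controlled by $\varepsilon$ through the gauge on $Z$; hence $F$ has negligible variation over $((I_0)^o)^c$.

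With the two negligible-variation notions identified, Definition \ref{DH}'s notion of Dunford-McShane integrability on $I_0$ becomes literally the three-bullet condition of the statement, and the equivalence (i)$\Leftrightarrow$(ii) of Theorem \ref{T2.1} closes the argument. The main obstacle is purely bookkeeping, ensuring that exterior tags contribute nothing and that gauges can be matched in both directions; no integration-theoretic input beyond Theorem \ref{T2.1} is required.
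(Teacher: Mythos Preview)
Your proposal is correct and follows essentially the same approach as the paper, which simply states that the theorem follows from Theorem~\ref{T2.1} with $G = I_{0}$. You are in fact more careful than the paper in explicitly reconciling the discrepancy between ``$\mathcal{M}$-negligible variation outside of $(I_{0})^{o}$'' (as in Definition~\ref{DH}) and ``$\mathcal{M}$-negligible variation on $Z = I_{0}\setminus (I_{0})^{o}$'' (as stated in the theorem), a point the paper leaves implicit.
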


\begin{theorem}\label{T2.2}
Let $f: G \to X$ be a function and let $F: \mathcal{I}_{G} \to X$ be an additive interval function. 
Then, the following statements are equivalent:
\begin{itemize}
\item[(i)]
$f$ is  the Dunford-Henstock-Kurzweil integrable on $G$ with the primitive $F$,
\item[(ii)] 
$f_{0}$ is the Henstock-Kurzweil integrable on $I_{0}$ with the primitive $F_{0}$ 
such that 
\begin{equation}\label{eqHHK}
F_{0}(I) = \sum_{k:|I \cap C_{k}|>0} F(I \cap C_{k}), 
\text{ for all }I \in \mathcal{I}_{I_{0}}
\end{equation}
whenever $(C_{k}) \in \mathscr{D}_{G^{o}}$, 
and $F$ is  a Dunford-function,
\item[(iii)]  
$F$ is a Dunford-function and has $\mathcal{HK}$-negligible variation outside of $G^{o}$, and given any division $(C_{k}) \in \mathscr{D}_{G^{o}}$, 
each $f_{k}$ is Henstock-Kurzweil integrable on $C_{k}$ 
with the primitive $F_{k}$. 
\end{itemize}
\end{theorem}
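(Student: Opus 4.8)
The plan is to run the cycle $(i)\Rightarrow(iii)\Rightarrow(ii)\Rightarrow(i)$ exactly as in the proof of Theorem \ref{T2.1}, replacing the McShane integral by the Henstock–Kurzweil integral throughout and invoking the Henstock–Kurzweil versions of the Saks–Henstock lemma (Lemma 3.4.1 in \cite{SCH1}) and of Lemmas \ref{L2.1} and \ref{L2.2}. The one genuine change is in condition $(ii)$: the measure-additive identity $F_0(I)=F(I)$ available in the McShane case is here replaced by the division-sum \eqref{eqHHK} together with the explicit hypothesis that $F$ be a Dunford-function. This substitution is forced by the fact that the Henstock–Kurzweil integral is additive only over intervals and not over arbitrary measurable sets, so the identity $F_0(J)=(HK)\int_J f_0\,d\lambda$ cannot be split across the decomposition $J=(J\setminus G)\cup(J\cap(G\setminus G^{o}))\cup(J\cap G^{o})$ the way the McShane proof does.

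For $(i)\Rightarrow(iii)$ the argument is verbatim: Definition \ref{DH} already provides that $F$ is a Dunford-function with $\mathcal{HK}$-negligible variation outside $G^{o}$, and for a division $(C_k)\in\mathscr{D}_{G^{o}}$ I restrict the gauge to $\delta_k=\delta\vert_{C_k}$; since an $\mathcal{HK}$-tag lies in its own interval, every $\delta_k$-fine $\mathcal{HK}$-partition of $C_k$ has all tags in $C_k\subset G^{o}$ and is $\delta$-fine in $G^{o}$, giving Henstock–Kurzweil integrability of $f_k$ on $C_k$ with primitive $F_k$. For $(iii)\Rightarrow(ii)$ I reproduce the construction of Theorem \ref{T2.1}: formula \eqref{eqHHK} defines an additive interval function $F_0$ on $\mathcal{I}_{I_0}$ because $F$ is a Dunford-function, and $F_0=F$ on $\mathcal{I}_G$ by the second bullet of that definition. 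I assemble $\delta_0$ on $G^{o}$ from the per-$C_k$ Saks–Henstock gauges of \eqref{eq_SaksHen}, set $\delta_0=\delta_v$ on $I_0\setminus G^{o}$, and split an arbitrary $\delta_0$-fine $\mathcal{HK}$-partition of $I_0$ into the four families $\pi_1,\pi_2,\pi_3,\pi_4$ as in \eqref{eq_SS.1}; the first two are controlled by Saks–Henstock on each $C_k$ as in \eqref{eq_SS.2} and \eqref{eq_SS.3}, the family $\pi_3$ by the $\mathcal{HK}$ form of Lemma \ref{L2.2}, and $\pi_4$ by $\mathcal{HK}$-negligible variation, yielding the bound $\varepsilon$. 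Since the Henstock–Kurzweil primitive is unique and independent of the chosen division, \eqref{eqHHK} then holds for every $(C_k)\in\mathscr{D}_{G^{o}}$.

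For $(ii)\Rightarrow(i)$ I apply the Henstock–Kurzweil Saks–Henstock lemma on $I_0$ to obtain a gauge $\delta_0$, chosen so that $t\in G^{o}$ forces $B_m(t,\delta_0(t))\subset G^{o}$ and, via Lemma \ref{L2.1}, so that $(G\setminus G^{o})$-tagged $\delta_0$-fine partitions give small Riemann sums. Restricting $\delta=\delta_0\vert_{G^{o}}$ and using $F_0=F$ on $\mathcal{I}_G$ gives the first bullet of Definition \ref{DH}, while the Dunford-function property is hypothesis. The main obstacle is the $\mathcal{HK}$-negligible variation outside $G^{o}$, where the measure-additivity exploited in Theorem \ref{T2.1} is unavailable. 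I would instead route the computation through \eqref{eqHHK}: for any $(I_k)\in\mathscr{D}_{G^{o}}$ and any interval $J=I\cap I_0$ one has $F_0(J)=\sum_{k:|J\cap I_k|>0}F(J\cap I_k)=\sum_{k:|I\cap I_k|>0}F(I\cap I_k)$, since each $I_k\subset I_0$; the terms with tag outside $I_0$ vanish by the choice of $\delta_v$, so the quantity to be estimated equals $||\sum_{\pi_0}F_0(J)||$ over the partition $\pi_0$ of $I_0$ induced by intersecting the given $(G^{o})^{c}$-tagged partition with $I_0$. Writing $\pi_0=\pi_a\cup\pi_b$ according to whether the tag lies in $I_0\setminus G$ or in $G\setminus G^{o}$, adding and subtracting $f_0(t)|J|$, and bounding the difference sums by Saks–Henstock on $I_0$ and the pure sums $\sum f_0(t)|J|$ by Lemma \ref{L2.1} (using $f_0\equiv 0$ on $I_0\setminus G$) yields the bound $3\varepsilon$, which establishes $\mathcal{HK}$-negligible variation outside $G^{o}$ and closes the cycle.
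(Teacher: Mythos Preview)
Your proposal is correct and follows essentially the same route as the paper's own proof: the paper also runs the cycle $(i)\Rightarrow(iii)\Rightarrow(ii)\Rightarrow(i)$, declares the first two implications to go through ``by the same manner as in Theorem \ref{T2.1}'', and writes out $(ii)\Rightarrow(i)$ in detail, bypassing the measure-additivity argument of Theorem \ref{T2.1} by invoking \eqref{eqHHK} directly to identify $\sum_{\pi_0}F_0(J)$ with the double sum over $\pi_v$, then bounding $\sum_{\pi_0}F_0(J)$ via the split $\pi_0=\pi_a\cup\pi_b$ and the Saks--Henstock estimate to obtain $3\varepsilon$. Your explicit remark that uniqueness of the Henstock--Kurzweil primitive forces \eqref{eqHHK} to hold for \emph{every} division $(C_k)\in\mathscr{D}_{G^{o}}$ is a detail the paper leaves implicit.
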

\begin{proof}
By the same manner as in Theorem \ref{T2.1}, 
it can be proved $(i) \Rightarrow (iii)$ 
and $(iii) \Rightarrow (ii)$.

$(ii) \Rightarrow (i)$ 
Assume that $(ii)$ holds.  
Then, by Lemma 3.4.1 in \cite{SCH1} and by Lemma \ref{L2.1},  
given $\varepsilon >0$ there exists a gauge $\delta_{0}$ on $I_{0}$ such that 
for each $\delta_{0}$-fine $\mathcal{HK}$-partition $\pi$ in $I_{0}$, we have
\begin{equation}\label{eq_SaksHK}
||\sum_{(t,I) \in \pi } (~ f_{0}(t)|I| - F_{0}(I)~) || < \varepsilon,   
\end{equation}
and for each $\delta_{0}$-fine $(G \setminus G^{o})$-tagged 
$\mathcal{HK}$-partition $\pi$ in $I_{0}$, we have
\begin{equation}\label{eq_SaksHKK}
||\sum_{(t,I) \in \pi } f_{0}(t)|I|~ || < \varepsilon.   
\end{equation}
We can also choose $\delta_{0}$ 
so that $B_{m}(t,\delta_{0}(t)) \subset G^{o}$, 
for all $t \in G^{o}$. 
Hence, if we define $\delta = \delta_{0} \vert_{G^{o}}$, 
then for each 
$\delta$-fine $\mathcal{HK}$-partition $\pi$ in $G^{o}$, we have
\begin{equation*}
||\sum_{(t,I) \in \pi } (~ f(t)|I| - F(I)~) ||< \varepsilon.   
\end{equation*}
Thus, it remains to show that 
$F$ has $\mathcal{HK}$-negligible variation outside of $G^{o}$.  
To see this, define a gauge $\delta_{v}$ on $(G^{o})^{c}$ 
by $\delta_{v}(t) = \delta_{0}(t)$ if $t \in I_{0} \setminus G^{o}$, 
while for $t \notin I_{0}$, we choose $\delta_{v}(t)$ so that 
$B_{m}(t, \delta_{v}(t) ) \cap I_{0} = \emptyset$. 
Assume that $\pi_{v}$ is a  
$(G^{o})^{c}$-tagged $\delta_{v}$-fine $\mathcal{HK}$-partition 
in $\mathbb{R}^{m}$. 
Hence, 
$$
\pi_{0} = 
\{ (t,I \cap I_{0}): (t,I) \in \pi_{v}, t \in I_{0} \setminus G^{o}, |I \cap I_{0}| >0 \}
$$
is a $\delta_{0}$-fine $\mathcal{HK}$-partition in $I_{0}$. 
Note that $\pi_{0} = \pi_{a} \cup \pi_{b}$,  
where
$$
\pi_{a} = 
\{ (t,J) \in  \pi_{0}: t \in I_{0} \setminus G \}
\text{ and }
\pi_{b} = 
\{ (t,J) \in  \pi_{0}: t \in (G \setminus G^{o})\}.
$$
Since $\pi_{a}$ and $\pi_{b}$ are $\delta_{0}$-fine $\mathcal{HK}$-partitions in $I_{0}$, 
by \eqref{eq_SaksHK} and \eqref{eq_SaksHKK},  
it follows that
\begin{equation*}
\begin{split} 
|| \sum_{(t,J) \in \pi_{0}}   F_{0}(J) || 
&\leq
|| \sum_{(t,J) \in \pi_{a}}   F_{0}(J) ||  + 
|| \sum_{(t,J) \in \pi_{b}}   F_{0}(J) || \\
&\leq
|| \sum_{(t,J) \in \pi_{a}}   (f_{0}(t)|J| - F_{0}(J))  ||  + 
|| \sum_{(t,J) \in \pi_{b}}   (f_{0}(t)|J| - F_{0}(J))  || \\
&+|| \sum_{(t,J) \in \pi_{b}}   f_{0}(t)|J|~ || < 3\varepsilon.
\end{split}
\end{equation*} 
On the other hand, by \eqref{eqHHK}, 
we have also 
\begin{equation*}
\begin{split} 
|| \sum_{(t,J) \in \pi_{0}}   F_{0}(J) || 
&= || \sum_{(t,J) \in \pi_{0}} 
\left ( \sum_{k:|J \cap C_{k}|>0} F(J \cap C_{k}) \right )|| \\
&=|| \sum_{(t,I) \in \pi_{v}} \left ( \sum_{k:|I \cap C_{k}|>0} F(I \cap C_{k})  \right )||,
\end{split}
\end{equation*} 
whenever $(C_{k}) \in \mathscr{D}_{G^{o}}$. 
It follows that
\begin{equation*}
\begin{split} 
|| \sum_{(t,I) \in \pi_{v}} \left ( \sum_{k:|I \cap C_{k}|>0} F(I \cap C_{k}) \right )|| 
< 3\varepsilon. 
\end{split}
\end{equation*} 
This means that $F$ has $\mathcal{HK}$-negligible  
variation outside of $G^{o}$, 
and this ends the proof. 
\end{proof}

It easy to see that Theorem \ref{T2.2} together with Theorem 3.3.6 in \cite{SCH1} 
yields the following statement.

\begin{corollary}\label{C2.2}
Let $f: G \to X$ be a Dunford-Henstock-Kurzweil  integrable function on $G$ with the primitive $F$ 
and 
let $h: G \to X$ be a Dunford-Henstock-Kurzweil  integrable function on $G$ with the primitive $H$ 
Then, 
\begin{itemize}
\item[(i)]
$f + h$ is Dunford-Henstock-Kurzweil  integrable function on $G$ with the primitive $F + H$,
\item[(ii)] 
$r.f$ is Dunford-Henstock-Kurzweil integrable function on $G$ with the primitive $r.F$, 
where $r \in \mathbb{R}$.  
\end{itemize}
\end{corollary}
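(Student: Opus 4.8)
The plan is to deduce the corollary from the characterization of Dunford-Henstock-Kurzweil integrability given in Theorem \ref{T2.2}, combined with the linearity of the Henstock-Kurzweil integral (Theorem 3.3.6 in \cite{SCH1}). The key observation is that the zero-extension operation is linear, so that $(f+h)_{0} = f_{0} + h_{0}$ and $(r.f)_{0} = r.f_{0}$ as functions on $I_{0}$; this transfers the problem on $G$ into a problem about the extended functions on the interval $I_{0}$, where the classical linearity results apply directly.

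First I would fix a division $(C_{k}) \in \mathscr{D}_{G^{o}}$ and invoke the equivalence $(i) \Leftrightarrow (ii)$ of Theorem \ref{T2.2}. By hypothesis $f_{0}$ is Henstock-Kurzweil integrable on $I_{0}$ with primitive $F_{0}$ satisfying \eqref{eqHHK}, and $h_{0}$ is Henstock-Kurzweil integrable on $I_{0}$ with primitive $H_{0}$ satisfying the analogous identity $H_{0}(I) = \sum_{k:|I \cap C_{k}|>0} H(I \cap C_{k})$; moreover both $F$ and $H$ are Dunford-functions. By Theorem 3.3.6 in \cite{SCH1}, the function $f_{0} + h_{0} = (f+h)_{0}$ is Henstock-Kurzweil integrable on $I_{0}$ with primitive $F_{0} + H_{0}$, and $r.f_{0} = (r.f)_{0}$ is Henstock-Kurzweil integrable on $I_{0}$ with primitive $r.F_{0}$.

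Next I would verify that the candidate primitives $F+H$ and $r.F$ are themselves Dunford-functions and satisfy the compatibility identity \eqref{eqHHK}. For any $I \in \mathcal{I}$ and any $(I_{k}) \in \mathscr{P}_{G}$, the series $\sum_{k} F(I \cap I_{k})$ and $\sum_{k} H(I \cap I_{k})$ are unconditionally convergent; since the sum of two unconditionally convergent series is again unconditionally convergent, so is $\sum_{k} (F+H)(I \cap I_{k})$, and termwise addition gives $\sum_{k} (F+H)(I \cap I_{k}) = \sum_{k} F(I \cap I_{k}) + \sum_{k} H(I \cap I_{k})$. The same splitting shows that when $(I_{k}) \in \mathscr{D}_{G^{o}}$ the defining equality of a Dunford-function for $F$ and for $H$ adds up to the one for $F+H$; hence $F+H$ is a Dunford-function, and the analogous (easier) argument handles $r.F$. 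Applying this splitting with $I_{k} = C_{k}$ yields $(F_{0}+H_{0})(I) = \sum_{k:|I \cap C_{k}|>0}(F+H)(I \cap C_{k})$, which is precisely \eqref{eqHHK} for the pair $(f+h,\, F+H)$, and similarly for $(r.f,\, r.F)$.

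Having established that $(f+h)_{0}$ is Henstock-Kurzweil integrable with primitive $F_{0}+H_{0}$ satisfying \eqref{eqHHK}, with $F+H$ a Dunford-function, and likewise for $r.f$, the implication $(ii) \Rightarrow (i)$ of Theorem \ref{T2.2} delivers the conclusion. I do not expect a genuine obstacle here: every step is a transfer of a classical linearity statement through the zero-extension together with the elementary stability of unconditional convergence under finite linear combinations. The only point requiring care is the bookkeeping of the compatibility identity \eqref{eqHHK} for the summed primitive, which is the reason the splitting of the unconditionally convergent series must be carried out explicitly rather than merely asserted.
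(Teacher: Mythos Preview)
Your proposal is correct and follows exactly the route the paper indicates: the paper simply remarks that the corollary is an immediate consequence of Theorem~\ref{T2.2} together with Theorem~3.3.6 in \cite{SCH1}, without spelling out the details. Your explicit verification that $F+H$ and $r.F$ are Dunford-functions satisfying \eqref{eqHHK} is the natural unpacking of that one-line justification.
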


The next theorem follows from Theorem \ref{T2.2} with $G = I_{0}$.

\begin{theorem}\label{T2.21}
Let $f: I_{0} \to X$ be a function and let $F: \mathcal{I}_{I_{0}} \to X$ be a Dunford-function.  
Then, $f$ is Henstock-Kurzweil integrable on $I_{0}$ with the primitive $F$,  
if and only if we have
\begin{itemize}
\item
for each $\varepsilon >0$ there exists a gauge $\delta_{\varepsilon}$ on $(I_{0})^{o}$ such that  
for each $\delta_{\varepsilon}$-fine $\mathcal{HK}$-partition $\pi$ in 
$(I_{0})^{o}$, we have
$$
|| \sum_{(t,I) \in \pi } 
( f(t)|I| - F(I) ) || < \varepsilon,   
$$
\item
$F$ has $\mathcal{HK}$-negligible variation on $Z = I_{0} \setminus (I_{0})^{o}$.
\end{itemize}
\end{theorem}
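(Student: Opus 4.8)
The plan is to obtain Theorem \ref{T2.21} directly from Theorem \ref{T2.2} by specializing to $G = I_0$. First I would verify that $G = I_0$ satisfies the standing hypotheses placed on $G$: since $I_0$ is a closed non-degenerate interval, its interior $(I_0)^o$ is nonempty and its boundary $Z = I_0 \setminus (I_0)^o$ is Lebesgue null, so $|G \setminus G^o| = 0$. With this choice the extension $f_0$ coincides with $f$ (because $I_0 \setminus G = \emptyset$) and $\mathcal{I}_G = \mathcal{I}_{I_0}$, so that $F$ is already defined on the correct domain and no reinterpretation of data is needed.

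Next I would unwind the two relevant equivalent conditions of Theorem \ref{T2.2}. Since $F$ is assumed to be a Dunford-function, for any $(C_k) \in \mathscr{D}_{(I_0)^o}$ the second defining property of a Dunford-function yields $\sum_{k:|I \cap C_k|>0} F(I \cap C_k) = F(I)$ for every $I \in \mathcal{I}_{I_0}$; hence the primitive $F_0$ appearing in Theorem \ref{T2.2}(ii) is exactly $F$. Consequently, condition (ii) read with $G = I_0$ says precisely that $f$ is Henstock-Kurzweil integrable on $I_0$ with primitive $F$, which is the left-hand side of the asserted equivalence. On the other hand, condition (i) unwinds, through Definition \ref{DH}, into the gauge estimate on $(I_0)^o$, the Dunford-function property of $F$, and the $\mathcal{HK}$-negligible variation of $F$ outside of $(I_0)^o$. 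Because $F$ is a Dunford-function by hypothesis, (i) reduces to the gauge estimate together with the negligible-variation requirement, matching the two bullet conditions on the right-hand side, modulo identifying ``outside of $(I_0)^o$'' with ``on $Z$.''

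The only point that genuinely needs an argument is this identification. The definition of negligible variation ``outside of $(I_0)^o$'' refers to partitions tagged in $(I_0^o)^c = \mathbb{R}^m \setminus (I_0)^o$, whereas the theorem speaks only of tags in $Z = I_0 \setminus (I_0)^o$. One inclusion is immediate, since every $Z$-tagged partition is a $(I_0^o)^c$-tagged partition and the gauge may simply be restricted. For the reverse I would reuse the gauge-shrinking device already employed in the proofs of Theorems \ref{T2.1} and \ref{T2.2}: for a tag $t \notin I_0$ one chooses $\delta(t)$ with $B_m(t,\delta(t)) \cap I_0 = \emptyset$, so that any associated interval $I$ satisfies $I \cap C_k = \emptyset$ for all $k$ (each $C_k \subset (I_0)^o \subset I_0$), whence the corresponding inner sums vanish. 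Thus exterior tags contribute nothing and the variation over $(I_0^o)^c$ collapses to the variation over $Z$. I expect this small equivalence to be the main obstacle; everything else is a direct transcription of Theorem \ref{T2.2} at $G = I_0$.
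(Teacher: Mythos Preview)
Your approach is correct and is exactly the paper's approach: the paper's entire proof is the one-line remark that the theorem follows from Theorem~\ref{T2.2} with $G=I_{0}$, and you have carefully spelled out why that specialization works, including the identification of ``$\mathcal{HK}$-negligible variation outside of $(I_{0})^{o}$'' with ``$\mathcal{HK}$-negligible variation over $Z=I_{0}\setminus(I_{0})^{o}$'' via the gauge-shrinking trick for tags $t\notin I_{0}$, a point the paper leaves implicit.
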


Finally, we are going to prove a relationship between 
the Dunford-Henstock-Kurzweil and Dunford-McShane integrals 
in terms of the Dunford integral.

\begin{theorem}\label{T2.22}
Let $f: I_{0} \to X$ be a function and let $F: \mathcal{I}_{I_{0}} \to X$ be an additive interval function.  
Then, the following statements are equivalent:
\begin{itemize}
\item[(i)]
$f$ is Dunford integrable and Dunford-Henstock-Kurzweil integrable on $I_{0}$ with the primitive $F$,
\item[(ii)]
$f$ is Dunford-McShane integrable on $I_{0}$ with the primitive $F$. 
\end{itemize}
\end{theorem}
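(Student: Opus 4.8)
The plan is to deduce the theorem from Fremlin's Theorem together with the characterizations of the McShane and Henstock-Kurzweil integrals supplied by Theorems~\ref{T2.11} and~\ref{T2.21}, specialized to $I_0$. First I would record the two translations. By Definition~\ref{DH} with $G=I_0$ and Theorem~\ref{T2.11}, the statement ``$f$ is Dunford-McShane integrable on $I_0$ with primitive $F$'' amounts to ``$f$ is McShane integrable on $I_0$ with primitive $F$''. Likewise, by Definition~\ref{DH} and Theorem~\ref{T2.21}, the statement ``$f$ is Dunford-Henstock-Kurzweil integrable on $I_0$ with primitive $F$'' amounts to the conjunction ``$F$ is a Dunford-function and $f$ is Henstock-Kurzweil integrable on $I_0$ with primitive $F$''. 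Writing McShane integrability, via Fremlin's Theorem, as the conjunction of Henstock-Kurzweil and Pettis integrability, the whole assertion then collapses to a single equivalence, to be established under the standing hypothesis that $f$ is Henstock-Kurzweil integrable on $I_0$ with primitive $F$: namely, $f$ is Dunford integrable and $F$ is a Dunford-function if and only if $f$ is Pettis integrable.

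For the direction $(ii)\Rightarrow(i)$ the argument is routine. If $f$ is Dunford-McShane integrable on $I_0$, then $f$ is McShane integrable on $I_0$ with primitive $F$, hence Henstock-Kurzweil integrable with the same primitive $F$ and, by the easy half of Fremlin's Theorem, Pettis integrable, so in particular Dunford integrable. It then remains only to verify the three clauses of Definition~\ref{DH}: the gauge estimate on $(I_0)^o$ is inherited from McShane integrability, $F$ is a Dunford-function by Theorem~\ref{T2.11}, and $F$ has $\mathcal{HK}$-negligible variation outside $(I_0)^o$ because it has $\mathcal{M}$-negligible variation there (each $\mathcal{HK}$-partition is an $\mathcal{M}$-partition, so a gauge witnessing the $\mathcal{M}$-estimate witnesses the $\mathcal{HK}$-estimate). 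This yields $(i)$.

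The substance lies in $(i)\Rightarrow(ii)$, that is, in upgrading Dunford integrability to Pettis integrability by means of the Dunford-function hypothesis. Here I would produce an $X$-valued measure from $F$ and identify it with the indefinite Dunford integral $\nu(A)=(D)\int_A f\,d\lambda\in X^{**}$. Fix a division $(I_k)\in\mathscr{D}_{(I_0)^o}$. Since $F$ is a Dunford-function, for every $I\in\mathcal{I}$ the series $\sum_{k:|I\cap I_k|>0}F(I\cap I_k)$ converges unconditionally in $X$, and $F(I)=\sum_{k:|I\cap I_k|>0}F(I\cap I_k)$ for $I\in\mathcal{I}_{I_0}$; this unconditional convergence is precisely the countable additivity of the finitely additive set function $\mu$ that $F$ induces on the ring of finite unions of subintervals of $I_0$. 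I would then extend $\mu$, by the standard extension theorem for countably additive vector measures, to a countably additive $X$-valued measure $\bar\mu$ on the Borel subsets of $I_0$, and on all of $\mathcal{L}(I_0)$ by completion. Because $x^*f$ is Lebesgue integrable for each $x^*\in X^*$, the scalar measures $A\mapsto x^*\bar\mu(A)$ and $A\mapsto\int_A x^*f\,d\lambda$ are both countably additive and agree on intervals, hence agree on every measurable $A$; thus $x^*\bar\mu(A)=\nu(A)(x^*)$ for all $x^*$, whence $\nu(A)=e(\bar\mu(A))\in e(X)$. Therefore $f$ is Pettis integrable, and Fremlin's Theorem gives that $f$ is McShane integrable on $I_0$ with primitive $F$ (its McShane primitive agreeing with the Henstock-Kurzweil primitive $F$ on intervals), so $f$ is Dunford-McShane integrable by Theorem~\ref{T2.11}.

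I expect this last paragraph to be the main obstacle: a merely Dunford integrable function need not be Pettis integrable, so the Dunford-function property must be shown to furnish exactly the countable additivity that is otherwise missing. The two delicate points are (a) that unconditional convergence of $\sum_{k}F(I\cap I_k)$ over \emph{every} division promotes the finite additivity of $\mu$ to genuine countable additivity on the generating ring, and (b) the justification that the extension $\bar\mu$ is $X$-valued rather than merely $X^{**}$-valued; once $\bar\mu$ is in hand, the scalar identification pinning $\bar\mu$ to $\nu$ is immediate. For completeness I would also note the converse inclusion in the reduced equivalence, that Pettis integrability forces $F$ to be a Dunford-function: the indefinite Pettis integral is a countably additive $X$-valued measure whose values on subintervals coincide with $F$, and countably additive vector measures yield unconditionally convergent series over pairwise non-overlapping pieces, giving both clauses of the Dunford-function definition.
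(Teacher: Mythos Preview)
Your overall strategy matches the paper's: translate both Dunford-type integrabilities into ordinary McShane and Henstock-Kurzweil integrability on $I_0$ (the paper does this via Theorems~\ref{T2.1} and~\ref{T2.2} rather than \ref{T2.11} and \ref{T2.21}, but that is immaterial), and then reduce everything to showing that $f$ is Pettis integrable so Fremlin's Theorem can be applied. Your $(ii)\Rightarrow(i)$ direction is fine and essentially the paper's argument.

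The gap is precisely where you flagged it, and it is fatal in the form stated. There is no ``standard extension theorem for countably additive vector measures'' from a ring to the generated $\sigma$-algebra: Carath\'eodory-type extension fails for Banach-space-valued measures in general, and the known extension results (Kluv\'anek and others) require extra hypotheses such as bounded variation or relative weak compactness of the range that you have not established. Your point~(b) is not merely delicate; it is exactly the Pettis-integrability question in disguise, so the argument is circular as it stands. Concretely, the Dunford integral $\nu(A)=(D)\int_A f\,d\lambda$ already gives a weak$^*$-countably additive $X^{**}$-valued extension agreeing with $e\circ F$ on intervals, and the whole difficulty is showing that this particular extension is $e(X)$-valued; building a second extension $\bar\mu$ does not bypass that.

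The paper avoids this by citing a tailored criterion (Lemma~6 in \cite{GUO2}): from the Dunford-function hypothesis it extracts that $(D)\int_I f\,d\lambda\in e(X)$ for every $I\in\mathcal{I}_{I_0}$ and that $\sum_k (D)\int_{I_k} f\,d\lambda$ is norm convergent in $e(X)$ for every $(I_k)\in\mathscr{P}_{I_0}$, and that lemma then delivers Pettis integrability directly. If you want to make your route self-contained, you would need to prove something equivalent to that lemma---for instance, to use the unconditional convergence over \emph{all} divisions to deduce uniform integrability of $\{x^*f:\|x^*\|\le1\}$, or equivalently norm countable additivity of $\nu$---rather than invoke a vector-measure extension that is not available.
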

\begin{proof}
$(i) \Rightarrow (ii)$ Assume that $(i)$ holds. 

We first claim that 
\begin{equation}\label{eqDunf1}
(D)\int_{I} f d\lambda = x^{**}_{I} \in e(X), \text{ for all } I \in \mathcal{I}_{I_{0}}.
\end{equation}
To see this,  
let $I$ be an arbitrary closed non-degenerate interval in $\mathcal{I}_{I_{0}}$. 
Then, 
$$
x^{**}_{I}(x^{*}) = (M)\int_{I} x^{*}f d \lambda, \text{ for all } x^{*} \in X^{*},
$$
and since McShane integrable functions are Henstock-Kurweil integrable, 
it follows that
\begin{equation}\label{eqDunf2}
x^{**}_{I}(x^{*}) = (HK)\int_{I} x^{*}f d \lambda, \text{ for all } x^{*} \in X^{*}. 
\end{equation}
On the other hand, by Theorem \ref{T2.2}, 
$f$ is Henstock-Kurzweil integrable on $I_{0}$ with the primitive $F$ 
and, therefore, by Theorem 6.1.1 in \cite{SCH1}, it follows that
\begin{equation*}
(HK)\int_{I} x^{*}f d \lambda = x^{*} \left ( (HK)\int_{I} f d \lambda \right ), 
\text{ for all } x^{*} \in X^{*}.
\end{equation*}
The last result together with \eqref{eqDunf2} yields that \eqref{eqDunf1} holds.

We now claim that for any $(I_{k}) \in \mathscr{P}_{I_{0}}$ the series 
\begin{equation}\label{eqDunf3}
\sum_{k=1}^{+\infty} (D)\int_{I_{k}} f d\lambda 
\end{equation}
is norm convergent in $e(X)$. 
Indeed, since $F$ is a Dunford-function and the equality
$$
\sum_{k=1}^{+\infty} (D)\int_{I_{k}} f d\lambda = \sum_{k=1}^{+\infty} e(F(I_{k}))
$$
holds, it follows that \eqref{eqDunf3} is norm convergent in $e(X)$.

By virtue of Lemma 6 in \cite{GUO2}, $f$ is Pettis integrable. 
Thus, we have 
$f$ is Pettis integrable and  Henstock-Kurzweil integrable on $I_{0}$ with the primitive $F$. 
Therefore, by Theorem's Fremlin, Theorem 6.2.6 in \cite{SCH1}, 
we obtain that 
$f$ is McShane integrable on $I_{0}$ with the primitive $F$. 
Further, by Theorem \ref{T2.1}, 
it follows that 
$f$ is Dunford-McShane integrable on $I_{0}$ with the primitive $F$.

$(ii) \Rightarrow (i)$ Assume that $f$ is Dunford-McShane integrable on $I_{0}$ with the primitive $F$. 
Then, by Definition \ref{DH}, 
it follows that 
$f$ is Dunford-Henstock-Kurzweil integrable on $I_{0}$ with the primitive $F$.

By Theorem \ref{T2.1},  it follows that 
$f$ is McShane integrable on $I_{0}$ with the primitive $F$. 
Hence,  by Theorem 6.2.6 in \cite{SCH1}, 
$f$ is Pettis integrable and, therefore, $f$ is Dunford integrable,  
and this ends the proof.
\end{proof}

\bibliographystyle{plain}

\end{document}